\newtheorem{theorem}{Theorem}[section] 
\newtheorem{lemma}[theorem]{Lemma}     
\newtheorem{corollary}[theorem]{Corollary}
\title[Semigroup compactifications in terms of filters]
 {Semigroup compactifications in terms of filters} 
\author{T. Alaste}
\begin{document}
\maketitle

\begin{abstract}
We present a study of semigroup compactifications of a semitopological
semigroup $S$ using certain filters on $S$.
We characterize closed subsemigroups and
closed left, right, and two-sided ideals in any semigroup
compactification of any semitopological semigroup $S$ in terms of
these filters and in terms of ideals of the
corresponding $m$-admissible subalgebra of $C(S)$.
Furthermore,
we characterize those points in any semigroup compactification of $S$ which
belong either to the smallest ideal of the semigroup compactification
or to the closure of this smallest ideal.
\end{abstract}

\section{Introduction}
\label{sec:introduction}

Filters have proven to be an extremely powerful tool while analyzing
algebraic properties of the Stone-\v Cech compactification $\beta S$
of a discrete semigroup $S$.
Indeed, when the algebraic structure of $\beta S$ is considered, it is customary
to consider $\beta S$ as the space of all ultrafilters on $S$ (see
\cite{HindStra}).
In general, any semigroup compactification of a
semitopological semigroup $S$ can be realized as the spectrum of
some $m$-admissible subalgebra of $C(S)$ (see \cite{BJM}).
In this paper, we apply filters to study any semigroup
compactification of a semitopological semigroup $S$.

The Stone-\v Cech compactification of a discrete semigroup is
the most familiar example of a semigroup compactification which may be
considered as a space of filters,
but more general semigroup compactifications have also been studied
using filters.
The $\mathcal{LUC}$-compactification of a topological group $G$
was represented as the space of all near
ultrafilters on $G$ by Ko\c{c}ak and Strauss in \cite{near}.
Near ultrafilters on $G$ need not be filters in the ordinary sense of
the word, since they need not be closed under
finite intersections.
Recently, a representation of the $\mathcal{LUC}$-compactification of
$G$ using filters was given by the author in \cite{ufilters}.
The $\mathcal{LUC}$-compactification of a locally compact
topological group $G$ was also studied using filters
by Budak and Pym in \cite{BudPym},
where the $\mathcal{LUC}$-compactification of $G$ was considered
as a suitable quotient space of the Stone-\v Cech compactification of
$\beta G_d$.
Here, $G_d$ denotes the group $G$ endowed with the discrete topology.
The $\mathcal{WAP}$-compactification of a discrete semigroup
was studied using filters by
Berglund and Hindman in \cite{wapfilt} and 
a treatment of semigroup compactifications using certain equivalence
classes of $z$-filters was given by Tootkaboni in \cite{toot1}.

In this paper, we proceed as follows.
In Section \ref{sec:preliminaries}, we gather some terminology and
notation that we use throughout this paper.
Also, we describe briefly a consideration of a
semigroup compactification of a semitopological semigroup $S$ as a
space of certain filters on $S$.
In Section
\ref{sec:semigr-comp},
we describe the semigroup operation of a semigroup
compactification of $S$ in terms of  these filters.
In Section \ref{sec:closed-subsets-sigma}, we characterize
closed subsemigroups and closed left, right, and two-sided ideals in
any semigroup compactification of $S$ in terms of the introduced
filters and in terms of closed, proper ideals of the corresponding
$m$-admissible subalgebra of $C(S)$.
The last section contains a study of the smallest
ideal of a semigroup compactification of $S$.
We characterize those
points in any semigroup compactification of $S$ which are either in the
smallest ideal of the semigroup compactification in question or in the
closure of this smallest ideal.

\section{Preliminaries}
\label{sec:preliminaries}

We gather some terminology and introduce some notation that we
use throughout this paper.

Let $X$ be a non-empty set.
We denote by $\ell ^{\infty }(X)$ the
$C^{\ast }$-algebra of all bounded, complex-valued functions on $X$.
If $X$ is a topological space,
then we denote by $C(X)$ the $C^{\ast }$-subalgebra of $\ell ^{\infty }(X)$
consisting of continuous members of $\ell ^{\infty }(X)$.
Also, if $X$ is a topological space and $A\subseteq X$, then we denote
by $\text{int}_{X}(A)$ and $\text{cl}_{X}(A)$ the
interior and the closure of $A$ in $X$, respectively.

Let $S$ be a semigroup.
For every element $s\in S$,
the functions $\lambda _s:S\rightarrow S$ and $\rho _s:S\rightarrow S$
are given by $\lambda _s(t)=st$ and $\rho _s(t)=ts$ for every $t\in
S$, respectively.
If $s\in S$ and $A\subseteq S$,
then we frequently denote the sets $\lambda
_s^{-1}(A)$ and $\rho _s^{-1}(A)$ by
$s^{-1}A$ and $As^{-1}$, respectively.
For every function $f:S\rightarrow \mathbb{C}$ and for every element
$s\in S$, the functions $L_sf:S\rightarrow
\mathbb{C}$ and $R_sf:S\rightarrow \mathbb{C}$ are defined by
$L_sf=f\circ \lambda _s$ and $R_sf=f\circ \rho _s$, respectively.

A \emph{right topological semigroup} is a semigroup $S$ endowed with a
Hausdorff topology such that the function $\rho _s$ from
$S$ to $S$ is continuous for every $s\in S$.
The \emph{topological centre} of a right topological semigroup $S$ is
the set
$\Lambda (S)=\{ s\in S : \lambda _s:S\rightarrow S \text{ is
  continuous} \}$.
A \emph{semitopological semigroup} is a right topological semigroup
$S$ such that $\Lambda (S)=S$.

For the rest of this paper, let $S$ be a semitopological semigroup.
We recall briefly some definitions related to semigroup
compactifications.
For more details, see \cite{BJM}.

Let $\mathcal{F}$ be a $C^{\ast }$-subalgebra of $C(S)$ such that
$\mathcal{F}$ contains the constant functions.
By an \emph{ideal} of $\mathcal{F}$, we always mean a closed, proper ideal of
$\mathcal{F}$.
The \emph{spectrum} of $\mathcal{F}$ is the set
\[
\Delta =\{ \mu \in \mathcal{F}^{\ast } : \mu \neq 0 \text{ and } \mu
(fg)=\mu (f)\mu (g) \text{ for all } f,g\in \mathcal{F} \},
\]
where $\mathcal{F}^{\ast }$ denotes the topological dual of
$\mathcal{F}$.
The \emph{evaluation mapping} $e:S\rightarrow \Delta $ is
defined by $[e(s)](f)=f(s)$ for every $s\in S$ and for every
$f\in \mathcal{F}$.
Under the relative weak$^{\ast }$ topology of $\mathcal{F}^{\ast }$,
the space $\Delta $ is a compact Hausdorff space and $\varepsilon (S)$
is dense in $\Delta $.

A non-empty subset $\mathcal{F}$ of $C(S)$ is \emph{left} [\emph{right}]
\emph{translation invariant} if and only if $L_sf\in \mathcal{F}$
[$R_sf\in \mathcal{F}$] for every $f\in \mathcal{F}$ and for every
$s\in S$,
and $\mathcal{F}$ is \emph{translation invariant} if and only if
$\mathcal{F}$ is both left translation invariant and right translation
invariant.
If $\mathcal{F}$ is a translation invariant linear subspace of $C(S)$
and $\mu \in \mathcal{F}^{\ast } $,
then the \emph{left introversion operator} determined by $\mu $ is
defined by $(T_{\mu }f)(s)=\mu (L_sf)$ for every $f\in \mathcal{F}$
and for every $s\in S$.
An \emph{m-admissible subalgebra} of $C(S)$ is a translation
invariant $C^{\ast }$-subalgebra $\mathcal{F}$
of $C(S)$ such that $\mathcal{F}$ contains the constant functions and
$T_{\mu }f\in \mathcal{F}$ for every $f\in \mathcal{F}$ and for every
$\mu \in \Delta $.

A \emph{semigroup compactification} of $S$ is a pair $(\psi ,X)$,
where $X$ is a compact right topological semigroup and $\psi
:S\rightarrow X$ is a continuous homomorphism such that $\psi (S)$ is
dense in $X$ and $\psi (S)\subseteq \Lambda (X)$.
If $\mathcal{F}$ is an $m$-admissible subalgebra of $C(S)$, then
$(e,\Delta )$ is a semigroup compactification of $S$.
The semigroup operation of $\Delta $ is defined by the rule $(\mu \nu
)(f)=\mu (T_{\nu }f)$ for every $f\in \mathcal{F}$.
Conversely,
if $(\psi ,X)$ is a semigroup compactification of $S$,
then the set $\mathcal{F}=\{ h\circ \psi : h\in C(X) \}$ is an
$m$-admissible subalgebra of $C(S)$ and $X$ is isomorphic with $\Delta $.

For the rest of this section, let $\mathcal{F}$ be an $m$-admissible
subalgebra of $C(S)$.
We describe how the spectrum of $\mathcal{F}$
can be viewed as a space of filters on $S$.
Construction of $\Delta $ as the space of all
$\mathcal{F}$-ultrafilters (defined below) on $S$ follows somewhat
similar steps as used in the construction of
the \emph{Smirnov compactification} of the proximity space $(S,\delta
(\mathcal{F}))$ using maximal round filters (see
\cite{Naimpally}), so we omit the details.
Here, $\delta (\mathcal{F})$ is the proximity on $S$ generated by
$\mathcal{F}$.

We denote by $\tau (\mathcal{F})$ the weak topology on $S$ generated
by $\mathcal{F}$.
For every subset $A$ of $S$, we denote by $A^{\circ }$ the $\tau
(\mathcal{F})$-interior of $A$.
An \emph{$\mathcal{F}$-filter} on $S$ is a filter $\varphi $ on $S$ such
that,
for every set $A\in \varphi $ with $A\neq S$, there exist a set
$B\in \varphi $ and a function $f\in \mathcal{F}$ with $f(B)=\{0\}$ and
$f(S\setminus A)=\{1\}$.
An \emph{$\mathcal{F}$-ultrafilter} on $S$ is a maximal (with respect
to inlclusion) $\mathcal{F}$-filter
$\varphi $ on $S$.
We apply the following property of $\mathcal{F}$-filters frequently:
If $\varphi $ is an $\mathcal{F}$-filter on $S$ and $A\subseteq
S$, then $A\in \varphi $ if and only if $A^{\circ }\in \varphi $.

Define $\delta S=\{ p : p \text{ is an $\mathcal{F}$-ultrafilter on }
S\}$.
The \emph{canonical mapping} $\varepsilon :S\rightarrow \delta S$ is
defined as follows.
If $s\in S$, then the family $\mathcal{N}_{\mathcal{F}}(s)$ of all
$\tau (\mathcal{F})$-neighborhoods of $s$ is an
$\mathcal{F}$-ultrafilter on $S$.
Then $\varepsilon (s)=\mathcal{N}_{\mathcal{F}}(s)$ for every $s\in S$.
Define $\widehat{A}=\{ p\in \delta S : A\in p\}$ for every $A\subseteq S$.
The family $\{ \widehat{A} : A\subseteq S \}$ is a base for a topology
on $\delta S$ and,
under this topology, $\delta S$ is homeomorphic
with $\Delta $.
If $f\in \mathcal{F}$, then there exists a unique function
$\widehat{f}\in C(\delta S)$ satisfying $f=\widehat{f}\circ
\varepsilon $.

We denote by $\overline{Y}$ the closure of a subset $Y$ of
$\delta S$ with one exception:
For every $A\subseteq S$, we denote the closure of $\widehat{A}$
by $\text{cl}_{\delta S}(\widehat{A})$.
If $A$ is a $\tau (\mathcal{F})$-open subset of $S$, then the equality
\begin{equation}
  \label{sf:eq:3}
\text{cl}_{\delta S}(\widehat{A})=\overline{\varepsilon (A)}
\end{equation}
holds.
For every $\mathcal{F}$-filter $\varphi $ on $S$,
put
$\widehat{\varphi }=\{ p\in \delta S : \varphi \subseteq p \}$ and
$\overline{\varphi }=\bigcap _{A\in \varphi } \overline{\varepsilon (A)}$.
Then $\widehat{\varphi }=\overline{\varphi }$, and so
$\widehat{\varphi }$ is a non-empty, closed subset of $\delta S$.
Conversely, for every non-empty, closed subset $C$ of $\delta S$,
there exists a unique $\mathcal{F}$-filter $\varphi $ on $S$ such that
$\widehat{\varphi }=C$.

For every $f\in \mathcal{F}$ and for every $r>0$,
put $S(f,r)=\{ s\in S : \lvert f(s)\rvert \leq r \}$.
Let $I$ be an ideal of $\mathcal{F}$ and
put $\mathcal{B}(I)=\{ S(f,r) : f\in I , \, r>0 \}$.
Then $\mathcal{B}(I)$ is a filter base on $S$ and the filter on $S$
generated by $\mathcal{B}(I)$ is an $\mathcal{F}$-filter.
Conversely,
if $\varphi $ is an $\mathcal{F}$-filter on $S$,
then there exists a unique ideal $I$ of $\mathcal{F}$ such that $\varphi
$ is generated by $\mathcal{B}(I)$.
For every $\mu \in \Delta $,
let $p(\mu )$ be the $\mathcal{F}$-ultrafilter on $S$ generated by
$\mathcal{B}(\ker \mu )$.
Then the mapping $\mu \mapsto p(\mu )$ from $\Delta $ to $\delta S$ is
a homeomorphism such that
$e(s)\mapsto \varepsilon (s)$ for every $s\in S$.

\section{Semigroup compactifications}
\label{sec:semigr-comp}

For the rest of this paper, let $\mathcal{F}$ be an $m$-admissible
subalgebra of $C(S)$.
Since the mapping $\mu \mapsto p(\mu )$ from $\Delta $ to $\delta S$
is a homeomorphism and $e(s)\mapsto \varepsilon (s)$ for every $s\in
S$,
the pair $(\varepsilon ,\delta S)$ is a semigroup compactification of $S$.
We proceed to describe the semigroup operation of $\delta S$.

\begin{definition}
\label{sf-Omega_def}
Let $\varphi $ be an $\mathcal{F}$-filter on $S$ and let $A\subseteq
S$.
Define
\[
\Omega _{\varphi }(A)= \{ s\in S : s^{-1}A\in \varphi \}.
\]
\end{definition}

The next lemma follows immediately from the previous definition.

\begin{lemma}
\label{sf-Omega_prop}
Let $\varphi $ and $\psi $ be $\mathcal{F}$-filters on $S$ and
$A,B\subseteq S$.
The following statements hold:
\begin{enumerate}[\upshape (i)]
\item\label{sf-Op_i}
If $A\subseteq B$, then $\Omega _{\varphi }(A)\subseteq
\Omega _{\varphi }(B)$.
\item\label{sf-Op_ii}
If $\varphi \subseteq \psi $, then $\Omega _{\varphi }(A)\subseteq
\Omega _{\psi }(A)$.
\item\label{sf-Op_iii}
$\Omega _{\varphi }(A\cap B)=\Omega _{\varphi }(A)\cap \Omega
_{\varphi }(B)$.
\end{enumerate}
\end{lemma}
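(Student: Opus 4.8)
The plan is to reduce each of the three claims to an elementary property of the preimage map $s^{-1}(\cdot)=\lambda _s^{-1}$ together with the two defining axioms of a filter, exactly as the remark preceding the statement suggests. The set-theoretic facts I will use are that $\lambda _s^{-1}$ preserves inclusions and commutes with finite intersections, and I will combine these with the fact that a filter $\varphi $ is closed upward and closed under finite intersection. Throughout, the argument proceeds pointwise: I fix an arbitrary $s\in S$ and analyse membership of $s^{-1}A$ (respectively $s^{-1}B$) in the relevant filter.

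For \eqref{sf-Op_i}, I would fix $s\in \Omega _{\varphi }(A)$, so that $s^{-1}A\in \varphi $. Since $A\subseteq B$ gives $s^{-1}A\subseteq s^{-1}B$, upward closure of $\varphi $ yields $s^{-1}B\in \varphi $, that is, $s\in \Omega _{\varphi }(B)$. For \eqref{sf-Op_ii}, I again fix $s\in \Omega _{\varphi }(A)$, so $s^{-1}A\in \varphi $; then the hypothesis $\varphi \subseteq \psi $ gives $s^{-1}A\in \psi $ directly, whence $s\in \Omega _{\psi }(A)$. Note that this second item uses nothing about the set structure, only the inclusion of the two filters.

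For \eqref{sf-Op_iii}, the key identity is $s^{-1}(A\cap B)=s^{-1}A\cap s^{-1}B$. I would prove the two inclusions of the claimed equality separately. For $\Omega _{\varphi }(A\cap B)\subseteq \Omega _{\varphi }(A)\cap \Omega _{\varphi }(B)$, if $s^{-1}(A\cap B)\in \varphi $, then since $s^{-1}(A\cap B)\subseteq s^{-1}A$ and $s^{-1}(A\cap B)\subseteq s^{-1}B$, upward closure places both $s^{-1}A$ and $s^{-1}B$ in $\varphi $. For the reverse inclusion, if both $s^{-1}A\in \varphi $ and $s^{-1}B\in \varphi $, then closure under finite intersection gives $s^{-1}A\cap s^{-1}B=s^{-1}(A\cap B)\in \varphi $. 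Combining the two directions yields the stated equality. There is no genuine obstacle in any of this; the only point requiring a little care is to keep track of which filter axiom—upward closure or closure under intersection—powers each inclusion, and to observe that item \eqref{sf-Op_iii} genuinely needs both axioms, one for each direction.
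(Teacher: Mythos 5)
Your proof is correct and is exactly the elementary verification the paper has in mind: the paper gives no written proof at all, stating only that the lemma ``follows immediately from the previous definition,'' and your argument---preimages $\lambda_s^{-1}$ preserve inclusions and commute with intersections, combined with upward closure and closure under finite intersections of the filter---is precisely that immediate argument spelled out. Note that this works because an $\mathcal{F}$-filter is by definition a filter in the ordinary sense, so both filter axioms you invoke are available.
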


In the next lemma,
we need to consider several cases while describing the
semigroup operation of $\delta S$.
Unlike with the Stone-\v Cech compactification of a discrete semigroup,
the subset $\widehat{A}$ of $\delta S$ for some subset $A$
of $S$ need not be closed.
Also, for an arbitrary subset $A$ of $S$,
the sets $\widehat{A}$ and $\varepsilon (A)$ can be very different.
If $S$ is discrete and $\mathcal{F}=\ell ^{\infty }(S)$, then
our results below agree with the known properties of the semigroup
operation of $\beta S$ (see \cite[p. 76]{HindStra}).

\begin{lemma}
\label{sf-product}
Let $A\subseteq S$, let $s\in S$, and let $p,q\in \delta S$.
The following statements hold:
\begin{enumerate}[\upshape (i)]
\item\label{sf-sfp_i}
If $A\in \varepsilon (s)q$, then $s^{-1}A\in q$.
\item\label{sf-sfp_ii}
If $A$ is $\tau (\mathcal{F})$-open and $q\in \text{cl}_{\delta
  S}(\widehat{s^{-1}A})$,
then $\varepsilon (s)q\in \text{cl}_{\delta S}(\widehat{A})$.
\item\label{sf-sfp_iia}
If $q\in \overline{\varepsilon (s^{-1}A)}$, then $\varepsilon (s)q\in \overline{\varepsilon (A)}$.
\item\label{sf-sfp_iii}
If $A\in pq$, then $\Omega _q(A)\in p$.
\item\label{sf-sfp_iv}
If $A$ is $\tau (\mathcal{F})$-open and $p\in \text{cl}_{\delta S} (\widehat{\Omega  _q(A)})$, then
$pq\in \text{cl}_{\delta S}(\widehat{A})$.
\item\label{sf-sfp_iva}
If $p\in \overline{\varepsilon (\Omega  _q(A))}$, then $pq\in \overline{\varepsilon (A)}$.
\end{enumerate}
\end{lemma}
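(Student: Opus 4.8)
The plan is to prove the six parts of Lemma \ref{sf-product} in sequence, using earlier parts to bootstrap later ones and relying throughout on the dictionary between the operation on $\delta S$ and the introversion operators, together with the translation between filters and closed sets. Before attacking the parts I would record the operational meaning of the product $pq$: under the homeomorphism $\mu \mapsto p(\mu)$, the point $pq$ corresponds to $\mu\nu$ where $(\mu\nu)(f)=\mu(T_\nu f)$, and $(T_\nu f)(s)=\nu(L_s f)$. The key bridge I expect to use repeatedly is the characterization $A\in p \iff A^{\circ}\in p$ for $\mathcal{F}$-filters, the equality $\mathrm{cl}_{\delta S}(\widehat A)=\overline{\varepsilon(A)}$ for $\tau(\mathcal F)$-open $A$ from \eqref{sf:eq:3}, and the identity $A\in pq \iff pq\in\widehat A$. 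So the strategy is to reduce each membership or closure statement to a computation with $T_\nu$ and $L_s$ applied to the indicator-like data provided by the $\mathcal F$-filter condition.

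First I would handle \ref{sf-sfp_i}: if $A\in\varepsilon(s)q$ then I want $s^{-1}A\in q$. Since $\varepsilon(s)$ is the principal ultrafilter at $s$, the product $\varepsilon(s)q$ acts via $L_s$, and unwinding $A\in\varepsilon(s)q$ through the definition of the operation and Definition \ref{sf-Omega_def} (noting $\Omega_q(A)\ni s$ is exactly $s^{-1}A\in q$) should give the claim almost immediately; this is the base case. Next, \ref{sf-sfp_ii} and \ref{sf-sfp_iia} are the closure versions for the left translation: given $A$ $\tau(\mathcal F)$-open and $q\in\mathrm{cl}_{\delta S}(\widehat{s^{-1}A})$, I would use continuity of the operation in the right variable (valid since $\delta S$ is right topological and $\varepsilon(s)\in\Lambda(\delta S)$, so $\lambda_{\varepsilon(s)}$ is continuous) to push the closure through: $\varepsilon(s)\,\mathrm{cl}_{\delta S}(\widehat{s^{-1}A})\subseteq \mathrm{cl}_{\delta S}(\varepsilon(s)\widehat{s^{-1}A})$, then show $\varepsilon(s)\widehat{s^{-1}A}\subseteq\widehat{A}$ using part \ref{sf-sfp_i} in the reverse direction on the dense set. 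Part \ref{sf-sfp_iia} then follows by rewriting both closures via \eqref{sf:eq:3} after passing to interiors, since $\overline{\varepsilon(A)}=\mathrm{cl}_{\delta S}(\widehat{A^{\circ}})$ and $A^{\circ}$ is open.

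The heart of the lemma is \ref{sf-sfp_iii}, the general product statement $A\in pq\implies \Omega_q(A)\in p$. Here I would compute directly with the definition: $A\in pq$ means, after translating through $p(\mu\nu)$, that a suitable $S(f,r)$ with $f\in\ker(\mu\nu)$ is contained in $A$, and I would show that the function witnessing $\Omega_q(A)$ lies in $\ker\mu$ by evaluating $T_\nu$ on it and using that $(T_\nu f)(s)=\nu(L_s f)$ detects exactly whether $s^{-1}A\in q$. Concretely, the plan is to produce from the $\mathcal F$-filter/$\mathcal F$-ultrafilter data a function that vanishes on a $q$-large subset of $s^{-1}A$ precisely when $s\in\Omega_q(A)$, and to feed this through the introversion operator; membership $\Omega_q(A)\in p$ then falls out of $A\in pq$. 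I expect this step to be the main obstacle, because unlike the discrete case the set $\widehat A$ need not be closed and $\widehat A$ may differ wildly from $\varepsilon(A)$, so one cannot argue purely set-theoretically with ultrafilters — one must genuinely use the algebra $\mathcal F$ and the continuity encoded in $m$-admissibility to control $T_\nu$. Finally, parts \ref{sf-sfp_iv} and \ref{sf-sfp_iva} are the closure analogues of \ref{sf-sfp_iii} and would be proved by the same continuity-of-right-multiplication argument as \ref{sf-sfp_ii}–\ref{sf-sfp_iia}, now applied with the general point $p$ in place of $\varepsilon(s)$ and with $\Omega_q(A)$ in place of $s^{-1}A$: push $\mathrm{cl}_{\delta S}(\widehat{\Omega_q(A)})$ through $\rho_q$ (continuous since $\delta S$ is right topological) and contain the image in $\widehat A$ via \ref{sf-sfp_iii}, then convert to the $\overline{\varepsilon(\cdot)}$ form using \eqref{sf:eq:3} after replacing the relevant sets by their $\tau(\mathcal F)$-interiors.
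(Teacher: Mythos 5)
Your outlines for parts (i) and (iii) are workable, and they in fact take a different route from the paper: you argue in the kernel/introversion picture (a witness $f\in\ker(pq)$ with $S(f,r)\subseteq A$ yields $T_qf\in\ker p$ with $S(T_qf,r/2)\subseteq \Omega _q(S(f,r))\subseteq \Omega _q(A)$), whereas the paper argues topologically, using continuity of $\lambda _{\varepsilon (s)}$ resp.\ $\rho _q$ and density of $\varepsilon (S)$ to produce a basic neighborhood $\widehat{B}$ with $\lambda _{\varepsilon (s)}(\widehat{B})\subseteq \widehat{A}$ resp.\ $\rho _q(\widehat{B})\subseteq \widehat{A}$. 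The genuine gaps are in the four closure statements, and they are systematic: you repeatedly invoke the \emph{converses} of (i) and (iii). For (ii) your chain needs $\varepsilon (s)\widehat{s^{-1}A}\subseteq \widehat{A}$, i.e.\ $s^{-1}A\in r\Rightarrow A\in \varepsilon (s)r$; for (iv) you propose to ``contain the image in $\widehat{A}$ via (iii)'', i.e.\ $\Omega _q(A)\in p\Rightarrow A\in pq$. Neither implication is available. The paper proves only the forward directions, and its own example ($S=[0,\infty [$ multiplicative, $s=0$, $A=\{0\}$, where $s^{-1}A=S\in q$ for every $q$ yet $\widehat{A}=\emptyset $) shows the converse of (i) is false for general sets; nothing in your outline establishes it even for $\tau (\mathcal{F})$-open $A$, and no density argument can, because $\widehat{A}$ need not be closed --- density only ever places points in $\text{cl}_{\delta S}(\widehat{A})$, which is exactly why the lemma's conclusions are closure statements in the first place.

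The repair of your density idea is to use only implications that hold pointwise on $\varepsilon (S)$ and to land in closed sets: for open $A$ one has $\lambda _{\varepsilon (s)}(\varepsilon (s^{-1}A))\subseteq \widehat{A}$ (because $\varepsilon $ is a homomorphism and $A\in \mathcal{N}_{\mathcal{F}}(sx)$ for $x\in s^{-1}A$), together with $\text{cl}_{\delta S}(\widehat{s^{-1}A})\subseteq \overline{\varepsilon (s^{-1}A)}$; and for (iv) one must apply the already-proved part (ii) --- not part (iii) --- to the points $\varepsilon (t)q$ with $t\in \Omega _q(A)$, since for such $t$ one only knows $t^{-1}A\in q$. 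Separately, your derivation of (iia) and (iva) rests on the identity $\overline{\varepsilon (A)}=\text{cl}_{\delta S}(\widehat{A^{\circ }})$, which is false: by (\ref{sf:eq:3}) it would say $\overline{\varepsilon (A)}=\overline{\varepsilon (A^{\circ })}$, and in the same example $\overline{\varepsilon (\{0\})}=\{\varepsilon (0)\}$ while $\widehat{A^{\circ }}=\emptyset $. Parts (iia) and (iva) are stated for \emph{arbitrary} $A$ precisely because they do not reduce to the open case by passing to interiors (the paper needs them for non-open sets later, e.g.\ in Theorem \ref{sf-syndetic}); the paper's direct proof works because the witnessing point $sx\in B^{\circ }\cap A$ gives $\varepsilon (sx)\in \varepsilon (A)$ regardless of whether $A$ is a neighborhood of $sx$, so no openness of $A$ is ever used on the conclusion side.
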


\begin{proof}
(\ref{sf-sfp_i}) Suppose that $A\in \varepsilon (s)q$.
Since $\lambda _{\varepsilon (s)}$ is continuous on $\delta S$, there exists a
$\tau (\mathcal{F})$-open subset $B$ 
of $S$ such that $B\in q$ and $\lambda_{\varepsilon (s)}(\widehat{B})\subseteq
\widehat{A}$.
Then $\varepsilon (sx)\in \widehat{A}$ for every $x\in B$, so
$sx\in A$ for every $x\in B$, and so $B\subseteq s^{-1}A$.
Therefore, $s^{-1}A\in q$.

(\ref{sf-sfp_ii}) Suppose that $A$ is a $\tau (\mathcal{F})$-open subset of
$S$ and that $q\in \text{cl}_{\delta S}(\widehat{s^{-1}A})$.
If $B\in \varepsilon (s)q$,
then $\lambda _s^{-1}(B^{\circ })\in q$,
so $\lambda _s^{-1}(B^{\circ })\cap s^{-1}A\neq
\emptyset $, and so $B^{\circ }\cap A\neq \emptyset $.
Therefore, $\widehat{B}\cap \widehat{A}\neq \emptyset $, as required.

(\ref{sf-sfp_iia}) Suppose that $q\in \overline{\varepsilon (s^{-1}A)}$.
If $B\in \varepsilon (s)q$, then $C:=\lambda _s^{-1}(B^{\circ })\in q$ by statement
(\ref{sf-sfp_i}),
so $\widehat{C}\cap \varepsilon (s^{-1}A)\neq \emptyset $,
so $C\cap s^{-1}A\neq \emptyset $,
and so $B^{\circ }\cap A\neq \emptyset $.
Therefore, $\widehat{B}\cap \varepsilon (A)\neq \emptyset $, as required.

(\ref{sf-sfp_iii}) Suppose that $A\in pq$.
Since $\rho _q$ is continuous on $\delta S$, there exists a $\tau
(\mathcal{F})$-open subset $B$ of $S$
such that $B\in p$ and $\rho _q(\widehat{B})\subseteq
\widehat{A}$.
Then $\varepsilon (s)q\in \widehat{A}$ for every $s\in B$, and so
$B\subseteq \Omega _q(A)$ by statement (i).
Therefore, $\Omega _q(A)\in p$.

(\ref{sf-sfp_iv}) Suppose that $A$ is a $\tau (\mathcal{F})$-open
subset of $S$.
Suppose also that $pq\notin
\text{cl}_{\delta S}(\widehat{A})$.
Then there exists a $\tau (\mathcal{F})$-open subset $B$ of $S$ such
that $B\in pq$ and $\widehat{B}\cap \widehat{A}=\emptyset $,
and so $A\cap B=\emptyset $.
Then $\Omega _q(A)\cap \Omega _q(B)=\emptyset $
by Lemma \ref{sf-Omega_prop} (\ref{sf-Op_iii}),
and so $\widehat{\Omega _q(A)}\cap \widehat{\Omega _q(B)}=\emptyset $.
Since $\Omega _q(B)\in p$ by statement (\ref{sf-sfp_iii}),
we have $p\notin \text{cl}_{\delta S}(\widehat{\Omega _q(A)})$.

(\ref{sf-sfp_iva}) 
Suppose that $pq\notin \overline{\varepsilon (A)}$.
Pick a $\tau (\mathcal{F})$-open subset $B$ of $S$
with $B\in pq$ and $\widehat{B}\cap \varepsilon (A)=\emptyset $.
Then $A\cap B=\emptyset $,
so $\Omega _q(A)\cap \Omega _q(B)=\emptyset $,
and so $\widehat{\Omega _q(B)}\cap \varepsilon (\Omega _q(A))=\emptyset $.
Therefore, $p\notin \overline{\varepsilon (\Omega _q(A))}$ by statement
(\ref{sf-sfp_iii}).
\end{proof}

The next corollary follows from statements (\ref{sf-sfp_iii}) and
(\ref{sf-sfp_iv}) of the previous lemma.

\begin{corollary}
\label{sf-sfp_cor}
Let $p,q\in \delta S$ and let $A\subseteq S$.
The following statements hold:
\begin{enumerate}[\upshape (i)]
\item\label{sf-sfp_cor_i}
If $A\in pq$, then there exist a set $B\in p$
and a family $\{ C_s : s\in B \}$ of members of $q$ such that
$\bigcup _{s\in B}sC_s\subseteq A$.
\item\label{sf-sfp_cor_ii}
If there exist a set $B\in p$ and a
family $\{ C_s : s\in B \}$ of members of $q$ with
$\bigcup _{s\in B}sC_s\subseteq A^{\circ }$,
then $pq\in \text{cl} _{\delta S}(\widehat{A})$.
\end{enumerate}
\end{corollary}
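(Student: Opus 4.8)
The plan is to read off both statements directly from parts (\ref{sf-sfp_iii}) and (\ref{sf-sfp_iv}) of Lemma \ref{sf-product}, using the operator $\Omega_q$ of Definition \ref{sf-Omega_def} as the bridge between the filter $pq$ and the family $\{C_s\}$. The slogan is that $\Omega_q(A)$ is exactly the set of ``first coordinates'' $s$ for which a suitable ``second coordinate'' set $C_s=s^{-1}A\in q$ exists.

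For statement (\ref{sf-sfp_cor_i}), suppose $A\in pq$. Lemma \ref{sf-product} (\ref{sf-sfp_iii}) gives $\Omega_q(A)\in p$ at once, so I would take $B=\Omega_q(A)$. By the very definition of $\Omega_q(A)$, each $s\in B$ satisfies $s^{-1}A\in q$, so I would set $C_s=s^{-1}A\in q$. It then remains only to note the set-theoretic identity $s(s^{-1}A)\subseteq A$, which yields $\bigcup_{s\in B}sC_s\subseteq A$ and finishes this direction; this half is essentially a repackaging of (\ref{sf-sfp_iii}).

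For statement (\ref{sf-sfp_cor_ii}), the key move is to pass from $A$ to $A^{\circ}$ so that Lemma \ref{sf-product} (\ref{sf-sfp_iv}), which demands a $\tau(\mathcal{F})$-open argument, becomes applicable; this is precisely why the hypothesis is phrased with $A^{\circ}$. Given $B\in p$ and $\{C_s : s\in B\}\subseteq q$ with $\bigcup_{s\in B}sC_s\subseteq A^{\circ}$, I would first show $B\subseteq\Omega_q(A^{\circ})$: for $s\in B$ the inclusion $sC_s\subseteq A^{\circ}$ rewrites as $C_s\subseteq s^{-1}A^{\circ}$, and since $C_s\in q$ and $q$ is a filter this forces $s^{-1}A^{\circ}\in q$, i.e. $s\in\Omega_q(A^{\circ})$. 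As $B\in p$ and $p$ is a filter, upward closure gives $\Omega_q(A^{\circ})\in p$, hence $p\in\widehat{\Omega_q(A^{\circ})}\subseteq\text{cl}_{\delta S}(\widehat{\Omega_q(A^{\circ})})$. Applying Lemma \ref{sf-product} (\ref{sf-sfp_iv}) to the open set $A^{\circ}$ then yields $pq\in\text{cl}_{\delta S}(\widehat{A^{\circ}})$.

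Finally I would remove the interior: since $A^{\circ}\subseteq A$ we have $\widehat{A^{\circ}}\subseteq\widehat{A}$, so monotonicity of the closure operator gives $\text{cl}_{\delta S}(\widehat{A^{\circ}})\subseteq\text{cl}_{\delta S}(\widehat{A})$, and the conclusion $pq\in\text{cl}_{\delta S}(\widehat{A})$ follows. There is no deep obstacle here; the proof is short bookkeeping on top of Lemma \ref{sf-product}. The one point that genuinely requires care is the openness hypothesis of (\ref{sf-sfp_iv}), which is exactly what dictates working with $A^{\circ}$ throughout the second part and accounts for the asymmetry between the $A$ appearing in (\ref{sf-sfp_cor_i}) and the $A^{\circ}$ appearing in (\ref{sf-sfp_cor_ii}).
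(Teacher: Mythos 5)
Your proof is correct and follows exactly the route the paper intends: the paper gives no written proof beyond remarking that the corollary ``follows from statements (\ref{sf-sfp_iii}) and (\ref{sf-sfp_iv}) of the previous lemma,'' and your argument --- taking $B=\Omega_q(A)$ and $C_s=s^{-1}A$ in part (\ref{sf-sfp_cor_i}), and showing $B\subseteq \Omega_q(A^{\circ})$ so that Lemma \ref{sf-product} (\ref{sf-sfp_iv}) applies to the open set $A^{\circ}$ in part (\ref{sf-sfp_cor_ii}) --- supplies precisely the omitted details. (One small simplification available to you: since $A\in\varphi$ if and only if $A^{\circ}\in\varphi$ for any $\mathcal{F}$-filter $\varphi$, one has $\widehat{A}=\widehat{A^{\circ}}$, so the final monotonicity step is in fact an equality.)
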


Statement (\ref{sf-sfp_ii}) of the previous lemma does not hold for arbitrary subsets of $S$.
For example, consider the multiplicative semigroup $S=[0,\infty [$
with the Euclidean topology
and let $\mathcal{F} $ be any $m$-admissible subalgebra of $C(S)$.
Put $s=0$ and $A=\{0\}$.
Then $s^{-1}A=S$, and so $\widehat{s^{-1}A}=\delta S$.
Since $A^{\circ }=\emptyset $, we have $\widehat{A}=\emptyset $.
However, if $S$ is algebraically a group and $A$ is any subset of $S$,
then $A\in \varepsilon (s)q$ if and only if $s^{-1}A\in q$.


\section{Closed subsets of $\delta S$}
\label{sec:closed-subsets-sigma}

We proceed to characterize closed
subsemigroups and closed left, right, and two-sided ideals of $\delta
S$ both in terms of $\mathcal{F} $-filters and the corresponding
ideals of $\mathcal{F} $.
For these purposes, we introduce the following lemma.



\begin{lemma}
\label{sf-sec4lemma}
Let $f\in \mathcal{F}$, let $s\in S$, let $p\in \delta S$, and let
$r>0$.
The following statements hold:
\begin{enumerate}
\item \label{sf-sec4lemma_i}
$\widehat{L_sf}=L_{\varepsilon (s)}\widehat{f}$ and
$\widehat{T_pf}=\rho _p\widehat{f}$.
\item \label{sf-sec4lemma_ii}
$S(T_pf,r/2)\subseteq \Omega _p(S(f,r))\subseteq S(T_pf,r)$.
\end{enumerate}
\end{lemma}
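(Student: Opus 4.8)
Throughout I write $\mu$ for the character in $\Delta$ corresponding to $p$ under the homeomorphism $\mu\mapsto p(\mu)$, so that $\widehat{g}(p)=\mu(g)$ for every $g\in\mathcal{F}$ and $(T_pf)(s)=\mu(L_sf)$; note that $L_sf$ and $T_pf$ lie in $\mathcal{F}$ by $m$-admissibility, so that $\widehat{L_sf}$ and $\widehat{T_pf}$ are defined. For part (\ref{sf-sec4lemma_i}) the plan is to compare continuous functions on $\delta S$ by checking agreement on the dense set $\varepsilon(S)$. The function $L_{\varepsilon(s)}\widehat{f}=\widehat{f}\circ\lambda_{\varepsilon(s)}$ is continuous since $\varepsilon(s)\in\Lambda(\delta S)$, and for $t\in S$ I would compute $\widehat{L_sf}(\varepsilon(t))=(L_sf)(t)=f(st)$ on one side and, using that $\varepsilon$ is a homomorphism, $(L_{\varepsilon(s)}\widehat{f})(\varepsilon(t))=\widehat{f}(\varepsilon(s)\varepsilon(t))=\widehat{f}(\varepsilon(st))=f(st)$ on the other; density then forces the first equality. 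For the second, $\rho_p\widehat{f}=\widehat{f}\circ\rho_p$ is continuous because $\delta S$ is right topological, and evaluating at $\varepsilon(s)$ and invoking the first equality gives
\[
\widehat{T_pf}(\varepsilon(s))=(T_pf)(s)=\mu(L_sf)=\widehat{L_sf}(p)=\widehat{f}(\varepsilon(s)p)=(\rho_p\widehat{f})(\varepsilon(s)),
\]
so density again yields $\widehat{T_pf}=\rho_p\widehat{f}$.

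For part (\ref{sf-sec4lemma_ii}) the first step is to localise at a fixed $s$. Since $s^{-1}S(f,r)=\lambda_s^{-1}(S(f,r))=S(L_sf,r)$, the condition $s\in\Omega_p(S(f,r))$ is precisely $S(L_sf,r)\in p$. Writing $g=L_sf$ and $c=(T_pf)(s)=\mu(g)$, both inclusions reduce to relating the filter-membership $S(g,r)\in p$ to the number $|c|$: I must show that $|c|\le r/2$ implies $S(g,r)\in p$, and that $S(g,r)\in p$ implies $|c|\le r$. The key tool is that $p$ is generated by $\mathcal{B}(\ker\mu)$, so that $g_0:=g-c\in\ker\mu$ and hence $S(g_0,\eta)\in p$ for every $\eta>0$.

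Both implications are then triangle-inequality arguments, and this passage is the only delicate point. If $|c|\le r/2$, then on $S(g_0,r/2)$ one has $|g|\le|g_0|+|c|\le r/2+r/2=r$, so $S(g_0,r/2)\subseteq S(g,r)$ and therefore $S(g,r)\in p$; this is exactly where the factor $r/2$ enters. Conversely, if $S(g,r)\in p$ but $|c|>r$, then choosing $0<\eta<|c|-r$ gives $|g|\ge|c|-|g_0|>r$ on $S(g_0,\eta)$, whence $S(g_0,\eta)\cap S(g,r)=\emptyset$; as both sets belong to the proper filter $p$, this is a contradiction and forces $|c|\le r$. I expect the main obstacle to be precisely this translation between the combinatorial statement $S(g,r)\in p$ and the analytic quantity $|\mu(g)|$ — in particular, pinning down the correct slack ($r/2$ on one side, $r$ on the other) and using the generation of $p$ by $\mathcal{B}(\ker\mu)$ in the right direction each time.
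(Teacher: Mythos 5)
Your proposal is correct, and part (\ref{sf-sec4lemma_i}) is exactly the paper's argument: each pair consists of continuous functions on $\delta S$ agreeing on the dense set $\varepsilon(S)$. For part (\ref{sf-sec4lemma_ii}) you make the same reduction as the paper --- namely $s\in\Omega_p(S(f,r))$ iff $S(L_sf,r)=\lambda_s^{-1}(S(f,r))\in p$, together with $(T_pf)(s)=\widehat{L_sf}(p)$ --- but you justify the two resulting implications algebraically, through the character $\mu$ and the filter base $\mathcal{B}(\ker\mu)$: since $L_sf-\mu(L_sf)\in\ker\mu$, the triangle inequality gives both $S(L_sf-c,r/2)\subseteq S(L_sf,r)$ for the first inclusion and the disjointness contradiction for the second. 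The paper instead argues topologically: for the second inclusion it uses that $S(L_sf,r)\in p$ forces $p\in\overline{\varepsilon(S(L_sf,r))}$, where $\lvert\widehat{L_sf}\rvert\leq r$ holds by continuity; for the first inclusion it simply asserts that $\lvert\widehat{L_sf}(p)\rvert\leq r/2$ yields $S(L_sf,r)\in p$, a terse step that your $\ker\mu$ computation legitimately fills in (it can also be filled in topologically, using that $\{q\in\delta S:\lvert\widehat{L_sf}(q)\rvert<r\}$ is a neighborhood of $p$ and that the sets $\widehat{A}$ with $A\in p$ and $A$ $\tau(\mathcal{F})$-open form a neighborhood base at $p$). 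Your route buys a self-contained, symmetric treatment of both inclusions and makes transparent where the slack between $r/2$ and $r$ is used; the paper's version is shorter because it leans on the closure facts $A\in p\Rightarrow p\in\overline{\varepsilon(A)}$ recorded in the preliminaries.
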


\begin{proof}
\ref{sf-sec4lemma_i}
The continuous functions $\widehat{L_sf}$ and $L_{\varepsilon (s)}\widehat{f}$
[$\widehat{T_pf}$ and $\rho _p\widehat{f}$] agree on $\varepsilon (S)$.

\ref{sf-sec4lemma_ii}
If $s\in S(T_pf,r/2)$,
then $\lvert \widehat{L_sf}(p)\rvert \leq r/2$,
and so $\lambda _s^{-1}(S(f,r))=S(L_sf,r)\in p$,
thus proving the first inclusion.
Next, if $s\in \Omega _p(S(f,r))$,
then $S(L_sf,r)\in p$, so $p\in \overline{\varepsilon (S(L_sf,r))}$,
and so $\lvert (T_pf)(s)\rvert \leq r$, thus finishing the proof.
\end{proof}

We apply the following consequence of the Gelfand-Naimark Theorem in
the proofs below:
Let $I$ be an ideal of $\mathcal{F}$, let $\varphi $ be the
$\mathcal{F}$-filter on $S$ generated by $\mathcal{B}(I)$, and let
$f\in \mathcal{F}$.
Then $f\in I$ if and only if
$\widehat{f}(p)=0$ for every $p\in \overline{\varphi }$.

\begin{theorem}
\label{sf-subsemi}
Let $\varphi $ be an $\mathcal{F} $-filter on $S$,
let $I$ be the ideal of $\mathcal{F}$ such that $\varphi $ is
generated by $\mathcal{B}(I)$,
and let $C=\overline{\varphi }$.
The following statements are equivalent:
\begin{enumerate}[\upshape (i)]
\item\label{sf-subsemi_i}
$C$ is a subsemigroup of $\delta S$.
\item\label{sf-subsemi_ii}
If $f\in I$ and $p\in C$, then $T_pf\in I$.
\item\label{sf-subsemi_iii}
If $A\in \varphi $ and $p\in C$,
then $\Omega _p(A)\in \varphi $.
\end{enumerate}
\end{theorem}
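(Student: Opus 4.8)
The plan is to prove the cycle of implications $(\ref{sf-subsemi_i}) \Rightarrow (\ref{sf-subsemi_iii}) \Rightarrow (\ref{sf-subsemi_ii}) \Rightarrow (\ref{sf-subsemi_i})$, since statements (ii) and (iii) are the algebraic and filter-theoretic faces of the same condition, and each is easiest to reach from a specific neighbour. The common thread throughout is the correspondence $f \in I \iff \widehat f(p)=0$ for all $p \in \overline\varphi = C$ (the Gelfand--Naimark consequence stated just above the theorem), together with the identities $S(f,r)$ generate $\varphi$, the translation formula $\widehat{T_p f} = \rho_p \widehat f$ from Lemma~\ref{sf-sec4lemma}\,\ref{sf-sec4lemma_i}, and the sandwich $S(T_pf,r/2) \subseteq \Omega_p(S(f,r)) \subseteq S(T_pf,r)$ from Lemma~\ref{sf-sec4lemma}\,\ref{sf-sec4lemma_ii}.

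For $(\ref{sf-subsemi_i}) \Rightarrow (\ref{sf-subsemi_iii})$, suppose $C$ is a subsemigroup, take $A \in \varphi$ and $p \in C$. Since $A \in \varphi \subseteq q$ for every $q \in C$ and $C = \overline\varphi$ is closed, the goal is to show $\Omega_p(A) \in \varphi$, equivalently $\widehat{\Omega_p(A)^{\circ}} \supseteq C$ in the appropriate closure sense. The idea is to fix an arbitrary $q \in C$: because $pq \in C$ by the subsemigroup hypothesis, we have $A \in pq$, and then Lemma~\ref{sf-product}\,\ref{sf-sfp_iii} gives $\Omega_p(A) \in q$ (reading the roles of $p,q$ correctly). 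Since this holds for every $q \in C$, the set $\Omega_p(A)$ lies in every ultrafilter of $C$, which by the bijection between closed sets and $\mathcal F$-filters forces $\Omega_p(A) \in \varphi$.

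For $(\ref{sf-subsemi_iii}) \Rightarrow (\ref{sf-subsemi_ii})$, take $f \in I$ and $p \in C$; I must show $T_pf \in I$, i.e. $\widehat{T_pf}(q)=0$ for all $q \in C$. Since $f \in I$, every level set $S(f,r)$ belongs to $\varphi$, so by (iii) the set $\Omega_p(S(f,r))$ belongs to $\varphi$ for each $r>0$. The right-hand inclusion of Lemma~\ref{sf-sec4lemma}\,\ref{sf-sec4lemma_ii} gives $\Omega_p(S(f,r)) \subseteq S(T_pf,r)$, hence $S(T_pf,r) \in \varphi$ for every $r>0$; this says exactly that $T_pf \in I$ by the Gelfand--Naimark characterization (the value $\widehat{T_pf}(q)$ is forced below every $r$ on all of $C$). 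Conversely, for $(\ref{sf-subsemi_ii}) \Rightarrow (\ref{sf-subsemi_i})$, take $p,q \in C$ and show $pq \in C = \overline\varphi$; equivalently $\widehat f(pq)=0$ for every $f \in I$. Writing $\widehat f(pq) = (\rho_q \widehat f)(p) = \widehat{T_qf}(p)$ via Lemma~\ref{sf-sec4lemma}\,\ref{sf-sec4lemma_i}, the hypothesis (ii) applied with $q$ in place of $p$ gives $T_qf \in I$, so $\widehat{T_qf}$ vanishes on $C$, in particular at $p$; thus $\widehat f(pq)=0$ and $pq \in C$.

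The main obstacle I anticipate is bookkeeping the asymmetry between $\Omega_p$ and $T_p$ and keeping the roles of $p$ and $q$ straight: $\Omega_q(A)$ is tested by $p$ in Lemma~\ref{sf-product}\,\ref{sf-sfp_iii}, while $T_q f$ acts through $\rho_q$ and is tested by $p$ in the product formula, so one must be careful which variable ranges over $C$ and which is fixed. A secondary subtlety is the passage between ``$A \in$ every ultrafilter of $C$'' and ``$A \in \varphi$'': this rests on the uniqueness clause of the closed-set/filter correspondence and on the property $A \in \varphi \iff A^{\circ} \in \varphi$, so I would invoke that explicitly rather than treat it as automatic. None of the steps requires a genuinely new construction; the content is entirely in threading the three cited lemmas together with the correct variable assignments.
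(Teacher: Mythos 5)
Your overall strategy is sound and genuinely different from the paper's: you run the cycle (i) $\Rightarrow$ (iii) $\Rightarrow$ (ii) $\Rightarrow$ (i), whereas the paper runs (i) $\Rightarrow$ (ii) $\Rightarrow$ (iii) $\Rightarrow$ (i). Your implication (iii) $\Rightarrow$ (ii) is correct and is essentially the paper's (ii) $\Rightarrow$ (iii) read backwards through the sandwich $S(T_pf,r/2)\subseteq \Omega _p(S(f,r))\subseteq S(T_pf,r)$ of Lemma \ref{sf-sec4lemma}, and your (ii) $\Rightarrow$ (i) is the mirror image of the paper's one-line (i) $\Rightarrow$ (ii) via $\widehat{T_qf}=\rho _q\widehat{f}$. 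One small slip in (i) $\Rightarrow$ (iii): from $A\in pq$, Lemma \ref{sf-product} (\ref{sf-sfp_iii}) gives $\Omega _q(A)\in p$, not $\Omega _p(A)\in q$; to get the latter you must apply the lemma to the product $qp$ (which also lies in $C$), so the fix is trivial, but as written the roles are crossed.

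The more serious point is that two of your bridging claims are genuine lemmas that the paper's stated toolkit does not hand you, and your sketch leaves them unproven. First, in (i) $\Rightarrow$ (iii) you need: if $B\in q$ for every $q\in C=\widehat{\varphi }$, then $B\in \varphi $, i.e. $\varphi =\bigcap _{q\in \widehat{\varphi }}q$. This does \emph{not} follow from the uniqueness clause of the closed-set/filter correspondence alone: to invoke uniqueness you must first know that $\bigcap _{q\in \widehat{\varphi }}q$ is an $\mathcal{F}$-filter, and that (or a direct argument) requires compactness of $\delta S$ --- for instance, from $\overline{\varphi }=\bigcap _{A\in \varphi }\overline{\varepsilon (A)}\subseteq \widehat{B^{\circ }}$ one extracts by compactness (and downward directedness of the sets $\overline{\varepsilon (A)}$) a single $A\in \varphi $ with $\overline{\varepsilon (A)}\subseteq \widehat{B^{\circ }}$, whence $A\subseteq B^{\circ }$ and so $B\in \varphi $. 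Second, in (ii) $\Rightarrow$ (i) you pass from ``$\widehat{f}(pq)=0$ for all $f\in I$'' to ``$pq\in C$''; but the Gelfand--Naimark consequence stated in the paper only says that $f\in I$ if and only if $\widehat{f}$ vanishes on $C$, which gives the inclusion of $C$ in the common zero set of $\{ \widehat{f} : f\in I \}$ --- the reverse inclusion (hull-kernel duality) is again true but is a separate fact needing an argument. Both missing facts are true and standard, so your proof can be completed; note, however, that the paper's ordering of the cycle is chosen precisely so that neither is needed: its (iii) $\Rightarrow$ (i) lands $pq$ in $\overline{\varepsilon (A^{\circ })}$ for every $A\in \varphi $ directly via Lemma \ref{sf-product} (\ref{sf-sfp_iv}) and (\ref{sf:eq:3}), and its (i) $\Rightarrow$ (ii) uses only the function-level equivalence, never the point-level converse.
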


\begin{proof}
(\ref{sf-subsemi_i}) $\Rightarrow $ (\ref{sf-subsemi_ii})
Suppose that $C$ is a subsemigroup of $\delta S$.
If $f\in I$ and $p\in C$,
then the function $\widehat{T_pf}$ vanishes on $C$
by Lemma \ref{sf-sec4lemma} \ref{sf-sec4lemma_i}, and so
$T_pf\in I$.

(\ref{sf-subsemi_ii}) $\Rightarrow $ (\ref{sf-subsemi_iii})
Suppose that (ii) holds.
Let $A\in \varphi $ and let $p\in C$.
Pick $f\in I$ and $r>0$ such that $S(f,r)\subseteq A$.
Then $S(T_pf,r/2)\subseteq \Omega _p(A)$ by Lemma
\ref{sf-sec4lemma} \ref{sf-sec4lemma_ii}.
Since $S(T_pf,r/2)\in \varphi $ by assumption,
we have $\Omega _p(A)\in \varphi $, as required.

(\ref{sf-subsemi_iii}) $\Rightarrow$ (\ref{sf-subsemi_i})
Suppose that (\ref{sf-subsemi_iii}) holds.
Let $p,q\in C$.
If $A\in \varphi $, then $A^{\circ }\in \varphi $, so $\Omega
_q(A^{\circ })\in
\varphi $ by assumption, and so $\Omega _q(A^{\circ })\in p$.
Then $pq\in \text{cl}_{\delta S}(\widehat{A^{\circ }})$ by Lemma
\ref{sf-product} (\ref{sf-sfp_iv}), and so $pq\in \overline{\varepsilon (A^{\circ
    })}$
by (\ref{sf:eq:3}).
Therefore, $pq\in \overline{\varphi }=C$, thus finishing the proof.
\end{proof}

An  application of the previous theorem to a single element
$p$ of $\delta S$ implies the next corollary.
The content of the next corollary is exactly the
same as in the case that $S$ is discrete and $\delta S=\beta S$
(see \cite[p. 76]{HindStra}).

\begin{corollary}
An element $p\in \delta S$ is an idempotent if and only if
$\Omega _p(A)\in p$ for every $A\in p$.
\end{corollary}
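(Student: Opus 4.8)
The plan is to derive this as a direct specialization of Theorem~\ref{sf-subsemi} applied to the singleton situation where the closed set $C$ is the single point $\{p\}$. First I would recall that for a single $\mathcal{F}$-ultrafilter $p$, the set $\{p\}$ is a non-empty closed subset of $\delta S$, and by the correspondence recorded in Section~\ref{sec:preliminaries} (for every non-empty closed $C$ there is a unique $\mathcal{F}$-filter $\varphi$ with $\overline{\varphi}=C$), the associated $\mathcal{F}$-filter is $p$ itself: indeed $\overline{p}=\bigcap_{A\in p}\overline{\varepsilon(A)}=\{p\}$ since $p$ is a maximal $\mathcal{F}$-filter. So taking $\varphi=p$ and $C=\overline{\varphi}=\{p\}$ in Theorem~\ref{sf-subsemi} is legitimate.

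Next I would observe that a single point $\{p\}$ is a subsemigroup of $\delta S$ precisely when $pp=p$, that is, precisely when $p$ is an idempotent; a singleton is closed under the multiplication if and only if the product of its unique element with itself lands back in the set. This identifies condition~(\ref{sf-subsemi_i}) of the theorem with the idempotency of $p$. Then I would read off condition~(\ref{sf-subsemi_iii}): with $\varphi=p$, it states that for every $A\in p$ and every element of $C=\{p\}$ — that is, for the element $p$ — we have $\Omega_p(A)\in p$. Thus (\ref{sf-subsemi_iii}) becomes exactly the statement ``$\Omega_p(A)\in p$ for every $A\in p$.'' The equivalence (\ref{sf-subsemi_i})$\Leftrightarrow$(\ref{sf-subsemi_iii}) furnished by the theorem then yields precisely the corollary.

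The only point requiring genuine care — and the step I would flag as the main obstacle — is justifying that the $\mathcal{F}$-filter associated with the closed singleton $\{p\}$ is $p$ itself, so that the quantifier ``$p\in C$'' in (\ref{sf-subsemi_iii}) collapses to the single instance of $p$ and ``$A\in\varphi$'' becomes ``$A\in p$''. This rests on the maximality of $p$ as an $\mathcal{F}$-ultrafilter: one must check that $\overline{p}=\{p\}$, equivalently that no other $\mathcal{F}$-ultrafilter contains $p$, which is immediate from maximality since $\widehat{p}=\{q\in\delta S:p\subseteq q\}$ equals $\{p\}$ when $p$ is maximal. Once this identification is in place, the corollary is a transparent reading of the theorem with no further computation, and I would present it in one or two lines invoking Theorem~\ref{sf-subsemi} directly.
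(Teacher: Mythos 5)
Your proposal is correct and is exactly the paper's argument: the paper derives the corollary by applying Theorem~\ref{sf-subsemi} to the single element $p$, i.e.\ to the closed singleton $C=\{p\}$ whose associated $\mathcal{F}$-filter is $p$ itself (since $\widehat{p}=\{q\in\delta S: p\subseteq q\}=\{p\}$ by maximality), so that statement (\ref{sf-subsemi_i}) becomes idempotency and statement (\ref{sf-subsemi_iii}) becomes $\Omega_p(A)\in p$ for every $A\in p$. Your identification of the key point to check, $\overline{p}=\{p\}$, is also the right one, and your justification of it is sound.
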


Next, we proceed to characterize left, right, and two-sided ideals
of $\delta S$.

\begin{definition}
\label{sf-leftthick_def}
An $\mathcal{F}$-filter $\varphi $ on $S$ is \emph{left}
[\emph{right}] \emph{thick} if and only if,
for every set $A\in \varphi $ and for every $s\in S$, there exists a set
$B\in \varphi $ such that  $sB\subseteq A$ [$Bs\subseteq A$].
\end{definition}

Note that an $\mathcal{F}$-filter $\varphi $ on $S$ is left [right] thick if
and only if $s^{-1}A\in \varphi $ [$As^{-1}\in \varphi $] for every
$A\in \varphi $ and for every $s\in S$.

\begin{theorem}
\label{sf-left}
Let $\varphi $ be an $\mathcal{F} $-filter on $S$,
let $I$ be the ideal of $\mathcal{F}$ such that $\varphi $ is
generated by $\mathcal{B}(I)$,
and let $L=\overline{\varphi }$.
The following statements are equivalent:
\begin{enumerate}[\upshape (i)]
\item\label{sf-left_i}
$L$ is a left ideal of $\delta S$.
\item\label{sf-left_ii}
$I$ is left translation invariant.
\item\label{sf-left_iii}
$\varphi $ is left thick.
\end{enumerate}
\end{theorem}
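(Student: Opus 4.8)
The plan is to prove the cyclic chain of implications $(\ref{sf-left_i})\Rightarrow(\ref{sf-left_iii})\Rightarrow(\ref{sf-left_ii})\Rightarrow(\ref{sf-left_i})$, mirroring the structure of Theorem \ref{sf-subsemi} but exploiting the translation operators $L_s$ rather than the introversion operators $T_p$. The key observation linking the algebraic and filter-theoretic sides is that $\Omega_{\varphi}(A)$ measures left translates: by Definition \ref{sf-Omega_def}, $s\in\Omega_{\varphi}(A)$ means $s^{-1}A\in\varphi$, so the left-thickness condition (in its reformulated ``$s^{-1}A\in\varphi$'' form noted after Definition \ref{sf-leftthick_def}) says precisely that $s\in\Omega_{\varphi}(A)$ for every $s\in S$ whenever $A\in\varphi$, i.e. $\Omega_{\varphi}(A)=S$. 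Separately, the bridge between $I$ and translates is Lemma \ref{sf-sec4lemma}(\ref{sf-sec4lemma_ii}) in its simplest incarnation: for a fixed $s\in S$ one has $\lambda_s^{-1}(S(f,r))=S(L_sf,r)$, which is the identity I will use to convert membership of $L_sf$ in $I$ into membership of $s^{-1}A$ in $\varphi$.

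For $(\ref{sf-left_iii})\Rightarrow(\ref{sf-left_ii})$ I would take $f\in I$ and $s\in S$ and show $L_sf\in I$ via the Gelfand--Naimark criterion stated just before Theorem \ref{sf-subsemi}: it suffices to check $\widehat{L_sf}(p)=0$ for every $p\in\overline{\varphi}$. By Lemma \ref{sf-sec4lemma}(\ref{sf-sec4lemma_i}), $\widehat{L_sf}=L_{\varepsilon(s)}\widehat{f}$, so $\widehat{L_sf}(p)=\widehat{f}\bigl(\varepsilon(s)p\bigr)$, and left-thickness should force $\varepsilon(s)p\in\overline{\varphi}$, where $\widehat{f}$ vanishes. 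For $(\ref{sf-left_ii})\Rightarrow(\ref{sf-left_iii})$ I would reverse this: given $A\in\varphi$, pick $f\in I$ and $r>0$ with $S(f,r)\subseteq A$; then $L_sf\in I$ by hypothesis, so $S(L_sf,r)\in\varphi$, and since $S(L_sf,r)=s^{-1}S(f,r)\subseteq s^{-1}A$ we get $s^{-1}A\in\varphi$, which is exactly left-thickness. For $(\ref{sf-left_i})\Rightarrow(\ref{sf-left_iii})$ I would argue that if $L$ is a left ideal then for $p\in L$ and any $s\in S$ we have $\varepsilon(s)p\in L=\overline{\varphi}$; feeding $A\in\varphi$ and using Lemma \ref{sf-product}(\ref{sf-sfp_i}) (which gives $s^{-1}A\in p$ whenever $A\in\varepsilon(s)p$) together with the fact that the relevant containment must hold for \emph{all} $p\in L$ should yield $s^{-1}A\in\varphi$.

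The main obstacle will be the implication $(\ref{sf-left_iii})\Rightarrow(\ref{sf-left_i})$, i.e. showing $\varepsilon(s)p\in L$ for all $p\in L=\overline{\varphi}$ and all $s$ in the \emph{dense} subset $\varepsilon(S)$, and then extending to arbitrary left multipliers by continuity. The subtlety is twofold. First, to place $\varepsilon(s)p$ inside $\overline{\varphi}=\bigcap_{A\in\varphi}\overline{\varepsilon(A)}$ I must verify $\varepsilon(s)p\in\overline{\varepsilon(A)}$ for each $A\in\varphi$; the natural tool is Lemma \ref{sf-product}(\ref{sf-sfp_iia}), which delivers exactly $\varepsilon(s)p\in\overline{\varepsilon(A)}$ provided $p\in\overline{\varepsilon(s^{-1}A)}$, and left-thickness guarantees $s^{-1}A\in\varphi$ so that $\overline{\varphi}\subseteq\overline{\varepsilon(s^{-1}A)}$. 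Second, a left ideal of $\delta S$ must absorb multiplication by \emph{every} element of $\delta S$, not merely by points of $\varepsilon(S)$; since $\delta S$ is right topological the map $q\mapsto qp$ is continuous, so $\delta S\cdot p=\overline{\varepsilon(S)}\cdot p\subseteq\overline{\varepsilon(S)p}$, and I would close the argument by combining this continuity with the density of $\varepsilon(S)$ and the closedness of $L$ to conclude $\delta S\,L\subseteq L$.
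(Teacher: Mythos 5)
Most of your scheme matches the paper's proof: your (\ref{sf-left_ii}) $\Rightarrow$ (\ref{sf-left_iii}) and (\ref{sf-left_iii}) $\Rightarrow$ (\ref{sf-left_i}) are exactly the paper's arguments (the paper's (\ref{sf-left_iii}) $\Rightarrow$ (\ref{sf-left_i}) likewise reduces to $\varepsilon(s)q\in L$ via Lemma \ref{sf-product} (\ref{sf-sfp_iia}), density of $\varepsilon(S)$, continuity of $\rho_q$ and closedness of $L$), and your (\ref{sf-left_iii}) $\Rightarrow$ (\ref{sf-left_ii}) is sound: left thickness plus Lemma \ref{sf-product} (\ref{sf-sfp_iia}) gives $\varepsilon(s)p\in \overline{\varphi }$ for every $p\in \overline{\varphi }$, so $\widehat{L_sf}(p)=\widehat{f}(\varepsilon(s)p)=0$ by Lemma \ref{sf-sec4lemma} \ref{sf-sec4lemma_i}, and $L_sf\in I$ by the Gelfand--Naimark criterion. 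The problem is (\ref{sf-left_i}) $\Rightarrow$ (\ref{sf-left_iii}), which is the only implication in your list that exits statement (\ref{sf-left_i}); without it you have only proved (\ref{sf-left_ii}) $\Leftrightarrow$ (\ref{sf-left_iii}) $\Rightarrow$ (\ref{sf-left_i}), so the equivalence is incomplete.

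The gap is the final inference, the one you flag with ``should yield'': from ``$s^{-1}A\in p$ for every $p\in L=\widehat{\varphi }$'' you conclude ``$s^{-1}A\in \varphi $''. For ordinary filters on a discrete set this is the standard fact that a filter is the intersection of the ultrafilters containing it, but for $\mathcal{F}$-filters it is not formal: you must exhibit $g\in I$ and $r>0$ with $S(g,r)\subseteq s^{-1}A$, and nothing in the definition of $\mathcal{F}$-ultrafilters or in the paper's stated preliminaries hands you this. The statement is in fact true, but proving it is a genuine extra step: since $B:=(s^{-1}A)^{\circ }\in p$ for every $p\in \widehat{\varphi }$, one has $\widehat{\varphi }\subseteq \widehat{B}$; normality of the compact space $\delta S$ then gives $h\in C(\delta S)$ with $h=0$ on $\widehat{\varphi }$ and $h=1$ off $\widehat{B}$, the function $f=h\circ \varepsilon \in \mathcal{F}$ lies in $I$ by the Gelfand--Naimark criterion, and $S(f,1/2)\subseteq B$. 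Your sketch supplies none of this. The paper avoids the issue entirely by proving (\ref{sf-left_i}) $\Rightarrow$ (\ref{sf-left_ii}) instead: if $L$ is a left ideal and $f\in I$, then $\widehat{L_sf}=L_{\varepsilon(s)}\widehat{f}$ vanishes on $L$ (because $\varepsilon(s)L\subseteq L$ and $\widehat{f}$ vanishes on $L$), whence $L_sf\in I$ --- precisely the computation you already use in your (\ref{sf-left_iii}) $\Rightarrow$ (\ref{sf-left_ii}). Replacing your gapped implication by this one closes the cycle with no new machinery.
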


\begin{proof}
(\ref{sf-left_i}) $\Rightarrow $ (\ref{sf-left_ii})
If $f\in I$ and $s\in S$,
then the function $\widehat{L_sf}$ vanishes on $L$.

(\ref{sf-left_ii}) $\Rightarrow $ (\ref{sf-left_iii})
This follows from the equality $S(L_sf,r)=\lambda _s^{-1}(S(f,r))$ and
from the fact that $\mathcal{B}(I)$ is a filter base for $\varphi $.

(\ref{sf-left_iii}) $\Rightarrow $ (\ref{sf-left_i})
Suppose that $\varphi $ is left thick.
To prove the statement,
it is enough to show that $\varepsilon (s)q\in L$ for every $s\in S$ and for
every $q\in L$.
So, let $s\in S$, let $q\in L$, and let $A\in \varphi $.
Pick a set $B\in \varphi $ with $sB\subseteq A$.
Since $q\in \overline{\varepsilon (B)}$,
we have $\varepsilon (s)q\in \overline{\varepsilon (A)}$, as required.
\end{proof}

Similar arguments as given in the proof of Theorem \ref{sf-subsemi}
apply to prove the next theorem, so we leave the details to the reader.

\begin{theorem}
\label{sf-right}
Let $\varphi $ be an $\mathcal{F} $-filter on $S$,
let $I$ be the ideal of $\mathcal{F}$ such that $\varphi $ is
generated by $\mathcal{B}(I)$,
and let $R=\overline{\varphi }$.
The following statements are equivalent:
\begin{enumerate}[\upshape (i)]
\item\label{sf-right_i}
$R$ is a right ideal of $\delta S$.
\item\label{sf-right_ii}
If $f\in I$ and $p\in \delta S$, then
  $T_pf\in I$.
\item\label{sf-right_iii}
If $A\in \varphi $ and $p\in \delta S$,
then $\Omega _p(A)\in \varphi $.
\end{enumerate}
\end{theorem}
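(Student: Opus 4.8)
The plan is to reproduce the proof of Theorem \ref{sf-subsemi} almost verbatim, the one structural change being that the auxiliary element now ranges over all of $\delta S$ instead of over $C$; this strengthening of the quantifier in (\ref{sf-right_ii}) and (\ref{sf-right_iii}) is exactly what promotes a closed subsemigroup to a closed right ideal. For (\ref{sf-right_i}) $\Rightarrow$ (\ref{sf-right_ii}) I would fix $f\in I$ and $p\in \delta S$ and show that $\widehat{T_pf}$ vanishes on $R$, which gives $T_pf\in I$ by the Gelfand--Naimark consequence recorded before Theorem \ref{sf-subsemi}. Indeed, for $q\in R$ Lemma \ref{sf-sec4lemma} \ref{sf-sec4lemma_i} yields $\widehat{T_pf}(q)=\widehat{f}(qp)$; since $R$ is a right ideal we have $qp\in R$, and since $f\in I$ the function $\widehat{f}$ vanishes on $R=\overline{\varphi}$, so $\widehat{f}(qp)=0$. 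The only departure from the subsemigroup case is that here $qp\in R$ is guaranteed for every $q\in R$ and every $p\in \delta S$, not merely for $p\in C$.

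For (\ref{sf-right_ii}) $\Rightarrow$ (\ref{sf-right_iii}) I would copy the middle step of Theorem \ref{sf-subsemi}. Given $A\in \varphi$ and $p\in \delta S$, pick $f\in I$ and $r>0$ with $S(f,r)\subseteq A$, which is possible because $\mathcal{B}(I)$ is a base for $\varphi$. Lemma \ref{sf-sec4lemma} \ref{sf-sec4lemma_ii} together with Lemma \ref{sf-Omega_prop} (\ref{sf-Op_i}) then gives $S(T_pf,r/2)\subseteq \Omega_p(S(f,r))\subseteq \Omega_p(A)$. Since $T_pf\in I$ by (\ref{sf-right_ii}), the set $S(T_pf,r/2)$ lies in $\varphi$, and therefore $\Omega_p(A)\in \varphi$.

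For (\ref{sf-right_iii}) $\Rightarrow$ (\ref{sf-right_i}) I would show that $pq\in R$ for every $p\in R$ and every $q\in \delta S$. Fix such $p,q$ and let $A\in \varphi$; then $A^{\circ}\in \varphi$. Applying (\ref{sf-right_iii}) to the element $q\in \delta S$ gives $\Omega_q(A^{\circ})\in \varphi$, hence $\Omega_q(A^{\circ})\in p$ because $\varphi \subseteq p$. Since $A^{\circ}$ is $\tau(\mathcal{F})$-open, Lemma \ref{sf-product} (\ref{sf-sfp_iv}) yields $pq\in \text{cl}_{\delta S}(\widehat{A^{\circ}})$, and (\ref{sf:eq:3}) rewrites this as $pq\in \overline{\varepsilon(A^{\circ})}\subseteq \overline{\varepsilon(A)}$. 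As $A\in \varphi$ was arbitrary, $pq\in \bigcap_{A\in \varphi}\overline{\varepsilon(A)}=\overline{\varphi}=R$, so $R$ is a right ideal.

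I do not anticipate a genuine obstacle, since the argument is mechanically parallel to Theorem \ref{sf-subsemi}. The one point deserving attention is the final implication: the ability to invoke (\ref{sf-right_iii}) for an arbitrary $q\in \delta S$, rather than only for $q\in C$, is precisely what upgrades the conclusion from the subsemigroup inclusion $pq\in C$ to the right-ideal inclusion $R\,\delta S\subseteq R$. Everything else, including the reductions to $\tau(\mathcal{F})$-open sets and the passage between ideals, filters, and closed sets, is identical to the earlier proof.
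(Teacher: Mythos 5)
Your proposal is correct and is exactly the argument the paper intends: the paper proves Theorem \ref{sf-right} by remarking that the proof of Theorem \ref{sf-subsemi} carries over, and your three implications reproduce that proof verbatim with the auxiliary element ranging over $\delta S$ instead of $C$, which is precisely the change needed to get the right-ideal conclusion. All invocations of Lemma \ref{sf-sec4lemma}, Lemma \ref{sf-product} (\ref{sf-sfp_iv}), equation (\ref{sf:eq:3}), and the Gelfand--Naimark consequence are used correctly.
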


Statement (ii) of the previous theorem implies that
$R_sf\in I$ for every $f\in I$ and for every $s\in S$,
and so $I$ is right translation invariant.
Then the equality $S(R_sf,r)=\rho _s^{-1}(S(f,r))$ implies that $\varphi $
is right thick.

The closure $\overline{\varphi }$ of a right thick
$\mathcal{F}$-filter $\varphi $ need not be a right ideal of $\delta S$.
If $S$ is commutative, then an $\mathcal{F}$-filter $\varphi $ on $S$
is left thick if and only if $\varphi $ is right thick.
But, in general, a closed left ideal of $\delta S$
need not be a right ideal of $\delta S$.
For example, let $\mathbb{R}^{\ast }$ denote the two-point
compactification of $\mathbb{R}$, that is $\mathbb{R}^{\ast
}=\mathbb{R} \cup \{ -\infty ,\infty \}$.
Then $\mathbb{R}^{\ast }$ is a semigroup compactification of
$\mathbb{R}$ (with $\infty $ and $-\infty $ as its right zero
elements).
The set $L=\{ \infty \}$ is a left ideal of $\mathbb{R}^{\ast }$.
Since $\infty (-\infty )=-\infty $, the set $L$ is not a right ideal
of $\mathbb{R}^{\ast }$.

Recall that $\delta S$ is a
semitopological semigroup if and only if $\mathcal{F} \subseteq \mathcal{WAP}
(S)$ (see \cite[pp. 138, 143]{BJM}).
The proof of the next theorem follows similar arguments as given in the proof of
Theorem \ref{sf-left}, so
we leave the details to the reader.

\begin{theorem}
\label{sf-rightwap}
Suppose that $\mathcal{F} \subseteq \mathcal{WAP}(S)$.
Let $\varphi $ be an $\mathcal{F} $-filter on $S$,
let $I$ be the ideal of $\mathcal{F}$ such that $\varphi $ is
generated by $\mathcal{B}(I)$,
and let $R=\overline{\varphi }$.
The following statements are equivalent:
\begin{enumerate}[\upshape (i)]
\item\label{sf-rightwap_i}
$R$ is a right ideal of $\delta S$.
\item\label{sf-rightwap_ii}
$I$ is right translation invariant.
\item\label{sf-rightwap_iii}
$\varphi $ is right thick.
\end{enumerate}
\end{theorem}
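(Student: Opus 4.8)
The plan is to establish the cycle of implications (i) $\Rightarrow$ (ii) $\Rightarrow$ (iii) $\Rightarrow$ (i), following the proof of Theorem~\ref{sf-left} with the roles of left and right interchanged. Two observations organize the whole argument. First, $R_sf=T_{\varepsilon(s)}f$, since $(T_{\varepsilon(s)}f)(t)=\varepsilon(s)(L_tf)=f(ts)=(R_sf)(t)$; this lets me transfer between right translation invariance of $I$ and the introversion operators already treated in Lemma~\ref{sf-sec4lemma}. Second, the standing hypothesis $\mathcal{F}\subseteq\mathcal{WAP}(S)$ makes $\delta S$ a semitopological semigroup, so that $\lambda_q$ is continuous on $\delta S$ for every $q\in\delta S$; this continuity is used only in the last implication, and is exactly the feature that the corresponding step of Theorem~\ref{sf-left} obtained for free from right topologicity.

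For (i) $\Rightarrow$ (ii), suppose $R$ is a right ideal and fix $f\in I$, $s\in S$. Combining $R_sf=T_{\varepsilon(s)}f$ with Lemma~\ref{sf-sec4lemma}~\ref{sf-sec4lemma_i} gives $\widehat{R_sf}=\rho_{\varepsilon(s)}\widehat{f}$, so $\widehat{R_sf}(p)=\widehat{f}(p\varepsilon(s))$ for every $p\in\delta S$. If $p\in R$ then $p\varepsilon(s)\in R$, whence $\widehat{f}(p\varepsilon(s))=0$; thus $\widehat{R_sf}$ vanishes on $R$, and the Gelfand--Naimark consequence recorded before Theorem~\ref{sf-subsemi} yields $R_sf\in I$. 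For (ii) $\Rightarrow$ (iii) I would mirror the matching step of Theorem~\ref{sf-left}: given $A\in\varphi$, choose $f\in I$ and $r>0$ with $S(f,r)\subseteq A$; since $S(R_sf,r)=\rho_s^{-1}(S(f,r))=S(f,r)s^{-1}$ and $R_sf\in I$, the set $S(R_sf,r)$ lies in $\mathcal{B}(I)\subseteq\varphi$ and is contained in $As^{-1}$, so $As^{-1}\in\varphi$. Neither of these two implications uses the $\mathcal{WAP}$ hypothesis.

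The content lies in (iii) $\Rightarrow$ (i). Because $\delta S$ is semitopological, each $\lambda_q$ is continuous, and since $\varepsilon(S)$ is dense in $\delta S$ and $R$ is closed it suffices to prove $q\varepsilon(s)\in R$ for all $q\in R$ and $s\in S$. Fixing such $q,s$ and an arbitrary $A\in\varphi$, I would use that $A^{\circ}\in\varphi$ and right thickness to choose $B\in\varphi$ with $Bs\subseteq A^{\circ}$, verify the inclusion $B\subseteq\Omega_{\varepsilon(s)}(A)$, and then apply Lemma~\ref{sf-product}~(\ref{sf-sfp_iva}) with the lemma's second factor taken to be $\varepsilon(s)$: since $q\in\overline{\varepsilon(B)}\subseteq\overline{\varepsilon(\Omega_{\varepsilon(s)}(A))}$, that lemma gives $q\varepsilon(s)\in\overline{\varepsilon(A)}$. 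Intersecting over all $A\in\varphi$ then yields $q\varepsilon(s)\in\overline{\varphi}=R$.

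I expect the inclusion $B\subseteq\Omega_{\varepsilon(s)}(A)$ to be the main obstacle, and the one point where the proof cannot be a formal mirror of Theorem~\ref{sf-left}. The issue is that $t^{-1}A\in\varepsilon(s)$ is a neighbourhood condition, namely $s\in(t^{-1}A)^{\circ}$, whereas right thickness only delivers the pointwise fact $ts\in A^{\circ}$ for $t\in B$. I would resolve this using that $\mathcal{F}$ is left translation invariant, so each $\lambda_t$ is $\tau(\mathcal{F})$-continuous and $t^{-1}(A^{\circ})=\lambda_t^{-1}(A^{\circ})$ is $\tau(\mathcal{F})$-open; then $s\in t^{-1}(A^{\circ})\subseteq(t^{-1}A)^{\circ}$, so $t^{-1}A\in\varepsilon(s)$ and $t\in\Omega_{\varepsilon(s)}(A)$. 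This is precisely why one must pass to $A^{\circ}$ before invoking right thickness, and why the right-hand case routes through $\Omega_{\varepsilon(s)}$ and Lemma~\ref{sf-product}~(\ref{sf-sfp_iva}) rather than through the cleaner Lemma~\ref{sf-product}~(\ref{sf-sfp_iia}) available on the left.
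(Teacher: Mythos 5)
Your proof is correct, and it is the argument the paper intends: the paper omits the proof of Theorem~\ref{sf-rightwap}, stating only that it follows the arguments of Theorem~\ref{sf-left}, and your three implications fill in exactly that mirroring, with the $\mathcal{WAP}$ hypothesis entering only where it must, namely in the reduction of (iii)~$\Rightarrow$~(i) to the case $q\varepsilon(s)$ via continuity of $\lambda_q$.

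One remark on your final step, which you flag as "the main obstacle" and as the point where "the proof cannot be a formal mirror" of Theorem~\ref{sf-left}. In fact a formal mirror does go through, and it avoids the detour through $\Omega_{\varepsilon(s)}$, the passage to $A^{\circ}$, and Lemma~\ref{sf-product}~(\ref{sf-sfp_iva}) entirely: given $q\in R$, $s\in S$ and $A\in\varphi$, right thickness yields $B\in\varphi$ with $Bs\subseteq A$; since $\delta S$ is right topological, $\rho_{\varepsilon(s)}$ is continuous, so from $q\in\overline{\varepsilon(B)}$ and the homomorphism identity $\varepsilon(B)\varepsilon(s)=\varepsilon(Bs)$ one gets
\[
q\varepsilon(s)=\rho_{\varepsilon(s)}(q)\in\overline{\varepsilon(B)\varepsilon(s)}
=\overline{\varepsilon(Bs)}\subseteq\overline{\varepsilon(A)},
\]
whence $q\varepsilon(s)\in\overline{\varphi}=R$. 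This is the exact right-handed analogue of the left case, where the corresponding step (Lemma~\ref{sf-product}~(\ref{sf-sfp_iia})) rests on continuity of $\lambda_{\varepsilon(s)}$, i.e.\ on $\varepsilon(S)\subseteq\Lambda(\delta S)$; here one uses right topologicity instead, which is likewise free. Your resolution via the $\tau(\mathcal{F})$-openness of $t^{-1}(A^{\circ})$ is nevertheless valid, and your observations that $R_sf=T_{\varepsilon(s)}f$ and that $\mathcal{WAP}$ is not needed for (i)~$\Rightarrow$~(ii)~$\Rightarrow$~(iii) are both correct and consonant with the paper's remarks following Theorem~\ref{sf-right}.
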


Combining Theorem \ref{sf-left} and Theorem
\ref{sf-right}, we obtain the following statement.

\begin{theorem}
Let $\varphi $ be an $\mathcal{F} $-filter on $S$,
let $I$ be the ideal of $\mathcal{F}$ such that $\varphi $ is
generated by $\mathcal{B}(I)$,
and let $J=\overline{\varphi }$.
The following statements are equivalent:
\begin{enumerate}[\upshape (i)]
\item $J$ is an ideal of $\delta S$.
\item $I$ is left translation invariant and $T_pf\in I$
for every $f\in I$ and for every $p\in \delta S$.
\item $\varphi $ is left thick and
  $\Omega _p(A)\in \varphi $ for every $A\in \varphi $ and 
  for every $p\in \delta S$.
\end{enumerate}
\end{theorem}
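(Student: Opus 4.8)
The plan is to reduce the statement entirely to the two theorems already proved, by exploiting the fact that a subset $J$ of $\delta S$ is a two-sided ideal if and only if it is simultaneously a left ideal and a right ideal of $\delta S$. Since $\varphi$, the associated ideal $I$, and the closed set $J=\overline{\varphi}$ are all fixed throughout, each of the three statements in the present theorem splits cleanly into a ``left part'' and a ``right part'', and conditions (ii) and (iii) are by inspection exactly the conjunctions of the corresponding conditions appearing in Theorem \ref{sf-left} and Theorem \ref{sf-right}. So the whole argument is a term-by-term application of those two results.

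First I would record the elementary algebraic observation that $J$ is an ideal of $\delta S$ if and only if $J$ is both a left ideal and a right ideal of $\delta S$; this is immediate from the definition of a two-sided ideal. Next, to establish the equivalence (i) $\Leftrightarrow$ (ii), I would apply Theorem \ref{sf-left} with $L=J$, which gives that $J$ is a left ideal precisely when $I$ is left translation invariant, together with Theorem \ref{sf-right} with $R=J$, which gives that $J$ is a right ideal precisely when $T_pf\in I$ for every $f\in I$ and every $p\in\delta S$. Conjoining these two biconditionals yields exactly statement (ii). The equivalence (i) $\Leftrightarrow$ (iii) is obtained in the same fashion, now invoking the third statement in each of the two earlier theorems: $J$ is a left ideal if and only if $\varphi$ is left thick, and $J$ is a right ideal if and only if $\Omega_p(A)\in\varphi$ for every $A\in\varphi$ and every $p\in\delta S$.

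There is no genuine difficulty in this proof; the only point deserving a moment's care is that the conjunction distributes correctly across the biconditionals, that is, that the condition ``$J$ is a left ideal and $J$ is a right ideal'' is equivalent to the condition ``$I$ is left translation invariant and $T_pf\in I$ for all $f,p$''. This holds because the two equivalences supplied by Theorems \ref{sf-left} and \ref{sf-right} are logically independent of one another, so they may simply be intersected. Accordingly I expect no real obstacle, and the proof amounts to citing the two prior theorems and combining them, which is precisely why one can reasonably leave the verification as routine.
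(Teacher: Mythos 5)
Your proposal is correct and is essentially identical to the paper's own argument: the paper states this theorem with the one-line justification that it is obtained by combining Theorem \ref{sf-left} and Theorem \ref{sf-right}, which is exactly the decomposition you describe (an ideal is by definition a set that is both a left ideal and a right ideal, and the two cited theorems characterize each half). Spelling out that the two biconditionals may simply be conjoined is all there is to it.
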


The previous theorem implies that every closed ideal $J$ of
$\delta S$ yields another semigroup compactification of $S$.
Indeed, let $J$ and $I$ be as above and put $\mathcal{F}'=I\oplus
\mathbb{C}$, where $\mathbb{C}$ denotes the constant functions on $S$.
If $J$ is an ideal of $\delta S$, then statement
(\ref{sf-right_ii}) of the previous theorem implies that
$\mathcal{F}'$ is an $m$-admissible subalgebra of $C(S)$.

If $S$ is commutative, then the description of ideals of $\delta S$
is much simpler.

\begin{theorem}
\label{sf-commideal}
Suppose that $S$ is commutative.
Let $\varphi $ be an $\mathcal{F} $-filter on $S$,
let $I$ be the ideal of $\mathcal{F}$ such that $\varphi $ is
generated by $\mathcal{B}(I)$,
and let $J=\overline{\varphi }$.
The following statements are equivalent:
\begin{enumerate}[\upshape (i)]
\item\label{sf-commideal_i}
$J$ is an ideal of $\delta S$.
\item\label{sf-commideal_ii}
If $f\in I$ and $p\in \delta S$, then $T_pf\in I$.
\item\label{sf-commideal_iii}
If $A\in \varphi $ and $p\in \delta S$,
then $\Omega _p(A)\in \varphi $.
\end{enumerate}
\end{theorem}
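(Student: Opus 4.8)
The plan is to reduce the whole statement to the one-sided results already proved, invoking commutativity at exactly one point. First I would observe that statements~(\ref{sf-commideal_ii}) and~(\ref{sf-commideal_iii}) are literally statements~(\ref{sf-right_ii}) and~(\ref{sf-right_iii}) of Theorem~\ref{sf-right}, which makes no commutativity assumption. Hence (\ref{sf-commideal_ii})~$\Leftrightarrow$~(\ref{sf-commideal_iii}) is immediate, and each of them is equivalent to the assertion that $R:=\overline{\varphi }=J$ is a \emph{right} ideal of $\delta S$. The implication (\ref{sf-commideal_i})~$\Rightarrow$~(\ref{sf-commideal_ii}) is then trivial, since an ideal of $\delta S$ is in particular a right ideal, so Theorem~\ref{sf-right} applies. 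Thus the only real task is to show that, for commutative $S$, a right ideal of the form $\overline{\varphi }$ is automatically a two-sided ideal.

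For this I would start from~(\ref{sf-commideal_ii}) and follow the chain recorded in the remark after Theorem~\ref{sf-right}: choosing $p=\varepsilon (s)$ in~(\ref{sf-commideal_ii}) gives $R_sf=T_{\varepsilon (s)}f\in I$ for every $f\in I$ and every $s\in S$, so $I$ is right translation invariant, and the equality $S(R_sf,r)=\rho _s^{-1}(S(f,r))$ shows that $\varphi $ is right thick. This is precisely where commutativity enters: by the remark preceding Theorem~\ref{sf-right}, a right thick $\mathcal{F}$-filter on a commutative $S$ is also left thick. Applying Theorem~\ref{sf-left} to the left thick filter $\varphi $ now shows that $J=\overline{\varphi }$ is a left ideal of $\delta S$, and combined with its being a right ideal this gives~(\ref{sf-commideal_i}).

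I do not expect a genuine obstacle here, because all the substantive work resides in Theorems~\ref{sf-right} and~\ref{sf-left} and in the two remarks surrounding Theorem~\ref{sf-right}; the entire content of the commutative case is the symmetry ``right thick $\Leftrightarrow$ left thick''. The one point that requires care is that the bridge from the right-ideal hypothesis to the left-ideal conclusion must be routed through the thickness of $\varphi $ rather than through $I$ directly, since it is thickness, and not translation invariance of $I$, for which commutativity of $S$ supplies the needed symmetry.
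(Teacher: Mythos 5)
Your proposal is correct and follows essentially the same route as the paper's own proof: reduce everything to Theorems~\ref{sf-right} and~\ref{sf-left}, derive from (\ref{sf-commideal_ii})/(\ref{sf-commideal_iii}) that $I$ is right translation invariant and hence $\varphi$ is right thick, and then invoke commutativity to convert right thickness into left thickness so that Theorem~\ref{sf-left} yields the left-ideal half. The only slip is a citation label: the equivalence ``left thick $\Leftrightarrow$ right thick for commutative $S$'' appears in the paragraph \emph{after} Theorem~\ref{sf-right}, not before it.
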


\begin{proof}
By Theorems \ref{sf-right} and \ref{sf-left},
we need only to show that statement (\ref{sf-commideal_iii}) implies
that $\varphi $ is left thick.
But if (\ref{sf-commideal_iii}) holds, then $J$ is a right ideal of
$\delta S$ by Theorem \ref{sf-right},
so $\varphi $ is right thick (see the paragraph after Theorem
\ref{sf-right}),
and so $\varphi $ is left thick.
\end{proof}

\section{Smallest ideal of $\delta S$ and its closure}
\label{sec:minimal-ideal}

In this last section, we characterize those points of $\delta S$ which are
either in the smallest ideal $K$ of $\delta S$ or in its closure $\overline{K}$.
For the Stone-\v Cech compactification of a discrete semigroup $S$,
a characterization of these points using ultrafilters is given in
\cite[Section 4.4]{HindStra}.
These points were characterized by the author using filters in
\cite{ufilters} in the $\mathcal{LUC}$-compactification of a
topological group.
For the $\mathcal{LMC}$-compactification of a locally compact
semitopological semigroup, a characterization of these points was given
in \cite{toot1} in terms of equivalence classes of $z$-ultrafilters.

The results concerning the points of $K$ and $\overline{K}$
in the previous references are somewhat similar:
The points in $K$ are related to syndetic subsets (defined below) of
$S$, and the points in $\overline{K}$ are related to piecewise
syndetic subsets (defined below) of $S$.
We show that syndetic and piecewise syndetic subsets of
$S$ describe the points of $K$ and $\overline{K}$, respectively, in
any semigroup compactification of $S$ for any semitopological
semigroup $S$.
Therefore,
the results given below are generalizations of the results obtained in
the above references.

Some of the proofs given below follow similar arguments as
given in \cite[Section 4.4]{HindStra} (and in \cite{ufilters}).
However, these proofs usually require a number of small adjustments, so we give
the details for completeness.
We begin with the following simple lemma.

\begin{lemma}
\label{sf-tauopen}
If $A\subseteq S$ is $\tau (\mathcal{F})$-open,
then $s^{-1}A$ is $\tau (\mathcal{F})$-open for every $s\in S$.
\end{lemma}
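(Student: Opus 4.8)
The plan is to show that the map $\lambda _s:S\rightarrow S$ is continuous when both the domain and the codomain carry the topology $\tau (\mathcal{F})$; the conclusion then follows at once, since $s^{-1}A=\lambda _s^{-1}(A)$ and the preimage of a $\tau (\mathcal{F})$-open set under a $\tau (\mathcal{F})$-continuous map is $\tau (\mathcal{F})$-open. The essential point is therefore to verify $\tau (\mathcal{F})$-continuity of $\lambda _s$, and here I would appeal to the defining universal property of the weak topology rather than to any continuity of $\lambda _s$ in the original topology of $S$.

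Recall that $\tau (\mathcal{F})$ is the weak topology on $S$ generated by $\mathcal{F}$, so a function into $(S,\tau (\mathcal{F}))$ is $\tau (\mathcal{F})$-continuous precisely when its composition with each $f\in \mathcal{F}$ is $\tau (\mathcal{F})$-continuous. Applying this to $\lambda _s$, I would observe that $f\circ \lambda _s=L_sf$ for every $f\in \mathcal{F}$. Since $\mathcal{F}$ is $m$-admissible, it is in particular left translation invariant, so $L_sf\in \mathcal{F}$; and every member of $\mathcal{F}$ is $\tau (\mathcal{F})$-continuous by the very definition of the weak topology generated by $\mathcal{F}$. Hence each $f\circ \lambda _s$ is $\tau (\mathcal{F})$-continuous, which gives the $\tau (\mathcal{F})$-continuity of $\lambda _s$.

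With $\lambda _s$ shown to be $\tau (\mathcal{F})$-continuous, I would finish by noting that $A$ being $\tau (\mathcal{F})$-open makes $\lambda _s^{-1}(A)=s^{-1}A$ a $\tau (\mathcal{F})$-open subset of $S$, as required. There is no genuine obstacle in this lemma; the only point demanding care is to resist using continuity of $\lambda _s$ for the ambient topology of $S$ (which the semitopological hypothesis supplies but which is not what is needed here) and instead to use left translation invariance of $\mathcal{F}$ together with the universal property of $\tau (\mathcal{F})$.
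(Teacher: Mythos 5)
Your proof is correct, but it takes a genuinely different route from the paper's. The paper proves the lemma inside the compactification $\delta S$: given $t\in s^{-1}A$, it notes that $\varepsilon (s)\varepsilon (t)\in \widehat{A}$, invokes the continuity of $\lambda _{\varepsilon (s)}$ on $\delta S$ (that is, the fact that $\varepsilon (S)$ lies in the topological centre $\Lambda (\delta S)$) to produce a $\tau (\mathcal{F})$-open set $B\in \varepsilon (t)$ with $\lambda _{\varepsilon (s)}(\widehat{B})\subseteq \widehat{A}$, and then pulls this back, using that $\varepsilon (t)$ is exactly the $\tau (\mathcal{F})$-neighbourhood filter $\mathcal{N}_{\mathcal{F}}(t)$, to conclude that $s^{-1}A$ is a $\tau (\mathcal{F})$-neighbourhood of each of its points. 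Your argument never leaves $S$: you observe that $f\circ \lambda _s=L_sf\in \mathcal{F}$ by left translation invariance, that each member of $\mathcal{F}$ is $\tau (\mathcal{F})$-continuous by the very definition of the weak topology, and hence, by the universal property of the initial topology, that $\lambda _s$ is continuous as a map from $(S,\tau (\mathcal{F}))$ to itself --- which is precisely the content of the lemma. Your route is more elementary and more economical in hypotheses: it uses only the left translation invariance of $\mathcal{F}$, and none of the compactification machinery (the homomorphism property of $\varepsilon $, the topology of $\delta S$, or $\varepsilon (S)\subseteq \Lambda (\delta S)$). The paper's proof, on the other hand, is uniform with the filter-theoretic arguments used throughout the paper and exercises the identification of $\varepsilon (t)$ with the neighbourhood filter of $t$; but for this particular statement yours is arguably the cleaner argument.
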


\begin{proof}
Let $A\subseteq S$ be $\tau (\mathcal{F})$-open and let $s\in S$.
If $t\in s^{-1}A$,
then $\varepsilon (s)\varepsilon (t)\in \widehat{A}$,
and so there exists a $\tau (\mathcal{F})$-open subset $B$ of $S$ such that $B\in
\varepsilon (t)$ and $\lambda _{\varepsilon (s)}(\widehat{B})\subseteq \widehat{A}$.
Then $A\in \varepsilon (s)\varepsilon (u)$ for every $u\in B$, and so $B\subseteq s^{-1}A$.
Since $\varepsilon (t)$ is the filter of all $\tau (\mathcal{F})$-neighborhoods
of $t$, the statement follows.
\end{proof}

In what follows, we apply the fact that
$K$ is the union of all minimal left ideals of
$\delta S$ (see \cite[p. 34]{HindStra}).

\begin{definition}
A subset $A$ of $S$ is \emph{syndetic} if and only if there
exists a finite subset $F$ of $S$ such that $S=\bigcup _{s\in F}s^{-1}A$.
\end{definition}

\begin{theorem}
\label{sf-syndetic}
If $p\in \delta S$, then the following statements are equivalent:
\begin{enumerate}[\upshape (i)]
\item \label{sf-syndetic_i}
$p\in K$.
\item \label{sf-syndetic_ii}
If $A\in p$, then $\Omega _p(A)$ is syndetic.
\item \label{sf-syndetic_iii}
If $q\in \delta S$, then $p\in (\delta S) qp$.
\end{enumerate}
\end{theorem}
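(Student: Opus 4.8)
The plan is to treat the purely algebraic equivalence (\ref{sf-syndetic_i}) $\Leftrightarrow$ (\ref{sf-syndetic_iii}) separately and then to bridge to the syndeticity condition (\ref{sf-syndetic_ii}) through a single covering lemma. For (\ref{sf-syndetic_i}) $\Rightarrow$ (\ref{sf-syndetic_iii}) I would use only that $K$ is the union of the minimal left ideals of $\delta S$: if $p$ lies in a minimal left ideal $L$ and $q\in\delta S$, then $qp\in L$, so $(\delta S)qp$ is a non-empty left ideal contained in $L$, and minimality forces $(\delta S)qp=L\ni p$. For (\ref{sf-syndetic_iii}) $\Rightarrow$ (\ref{sf-syndetic_i}) I would choose $q\in K$; then $qp\in K$ and $(\delta S)qp\subseteq K$ because $K$ is a two-sided ideal, so $p\in(\delta S)qp\subseteq K$. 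In both cases one first notes that $\rho_{qp}$ is continuous, so $(\delta S)qp=\rho_{qp}(\delta S)$ is compact and therefore closed.

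The engine of the remaining argument is the following covering lemma, which takes the place of the clopen-set reasoning available for $\beta S$: for a $\tau(\mathcal F)$-open set $B\subseteq S$, the set $B$ is syndetic if and only if $\{\widehat{s^{-1}B}:s\in S\}$ covers $\delta S$, equivalently if and only if $\Omega_q(B)\neq\emptyset$ for every $q\in\delta S$. By Lemma \ref{sf-tauopen} each $s^{-1}B$ is again $\tau(\mathcal F)$-open, so the cover is by genuine basic open sets and $q\in\widehat{s^{-1}B}$ means exactly $s\in\Omega_q(B)$. For the direction from covering to syndeticity I would extract a finite subcover $\{\widehat{s^{-1}B}:s\in F\}$ by compactness of $\delta S$ and evaluate it at the neighbourhood filters: $\varepsilon(t)\in\widehat{s^{-1}B}$ says that $s^{-1}B$ is a $\tau(\mathcal F)$-neighbourhood of $t$, hence $t\in s^{-1}B$, so $S=\bigcup_{s\in F}s^{-1}B$. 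The opposite direction, from a finite cover of $S$ to a cover of $\delta S$, is where primeness of $\mathcal F$-ultrafilters on finite unions of $\tau(\mathcal F)$-open sets is needed.

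To reach (\ref{sf-syndetic_ii}) I would keep every relevant set $\tau(\mathcal F)$-open. Replacing $A\in p$ by $A^{\circ}\in p$ and using $\Omega_p(A^{\circ})\subseteq\Omega_p(A)$ (Lemma \ref{sf-Omega_prop}(\ref{sf-Op_i})) together with the fact that a superset of a syndetic set is syndetic reduces matters to basic members $S(f,r)\in p$ with $f\in\ker\mu_p$. The sandwich $S(T_pf,r/2)\subseteq\Omega_p(S(f,r))\subseteq S(T_pf,r)$ of Lemma \ref{sf-sec4lemma}(\ref{sf-sec4lemma_ii}) then lets me trap $\Omega_p(S(f,r))$ between the $\tau(\mathcal F)$-open sets $\{s:\lvert(T_pf)(s)\rvert<r/2\}$ and $\{s:\lvert(T_pf)(s)\rvert<r'\}$ for $r'>r$, so I may apply the covering lemma to an honest open set while still controlling $\Omega_p$. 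The translation between ``$s^{-1}B\in q$'' and products is then supplied by Lemma \ref{sf-product}: statements (\ref{sf-sfp_i}) and (\ref{sf-sfp_iia}) convert membership into assertions about $\varepsilon(s)q$, while (\ref{sf-sfp_iii}) and (\ref{sf-sfp_iv}) convert $\Omega_q$-membership into assertions about the product $qp$. Combining these with the minimal left ideal description of $K$ yields both implications: from $p\in K$ one produces the cover witnessing syndeticity of $\Omega_p(S(f,r))$, and conversely, if some $\Omega_p(A)$ is not syndetic, the covering lemma supplies a $q$ exhibiting a left ideal $(\delta S)qp$, hence a minimal left ideal, that separates $p$ and contradicts (\ref{sf-syndetic_iii}).

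The main obstacle I expect is not the algebra but the book-keeping forced by the failure of $\widehat B$ to be closed in $\delta S$: throughout I must distinguish $\widehat B$, $\overline{\varepsilon(B)}$, and $\text{cl}_{\delta S}(\widehat B)$, identifying the last two only through (\ref{sf:eq:3}) and only for $\tau(\mathcal F)$-open $B$. The one genuinely new analytic input is the primeness of $\mathcal F$-ultrafilters on finite unions of $\tau(\mathcal F)$-open sets; this holds because maximal $\mathcal F$-filters behave like the ends of the associated proximity, but it fails for arbitrary subsets of $S$, so the reduction to open sets via Lemma \ref{sf-tauopen} and the interiors of the sandwich sets in Lemma \ref{sf-sec4lemma}(\ref{sf-sec4lemma_ii}) is essential rather than cosmetic, and it is precisely this openness management that will require the most care.
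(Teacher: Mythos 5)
Your direct algebraic treatment of (\ref{sf-syndetic_i}) $\Leftrightarrow$ (\ref{sf-syndetic_iii}) is correct, and your bridge (\ref{sf-syndetic_i}) $\Rightarrow$ (\ref{sf-syndetic_ii}) can be completed along the lines you sketch (every $\widehat{C}$ is open in $\delta S$, so no openness reduction is even needed there). The proposal breaks down, however, at precisely the claim you flag as ``the one genuinely new analytic input'': $\mathcal{F}$-ultrafilters are \emph{not} prime on finite unions of $\tau (\mathcal{F})$-open sets, so the direction ``$B$ syndetic $\Rightarrow$ $\{ \widehat{s^{-1}B} : s\in S \}$ covers $\delta S$'' of your covering lemma is false. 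An open fattening of the paper's own closing example refutes it: in the one-point compactification $\delta \mathbb{R}=\mathbb{R}\cup \{\infty \}$ of $(\mathbb{R},+)$, put $B=\bigcup _{k\in \mathbb{Z}}\, (2k-\tfrac{5}{4},2k+\tfrac{1}{4})$. Then $B\cup (-1+B)=\mathbb{R}$, so $B$ is open and syndetic; but for every $s\in \mathbb{R}$ the points $2k+\tfrac{1}{2}-s$ lie outside $-s+B$ and converge to $\infty$, and since $\widehat{-s+B}$ is open with closed complement, this gives $-s+B\notin \infty $ for every $s$. Hence $\Omega _{\infty }(B)=\emptyset $ and no $\widehat{s^{-1}B}$ contains $\infty $, even though $\mathbb{R}=B\cup (-1+B)\in \infty $ with both $B$ and $-1+B$ open. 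This failure of primeness is exactly what separates $\delta S$ from $\beta S$, and no appeal to the proximity structure repairs it.

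There is also a gap in the logical architecture that interacts with this error. Your ``conversely'' step assumes the negation of (\ref{sf-syndetic_ii}) and derives the negation of (\ref{sf-syndetic_iii}); that is the contrapositive of (\ref{sf-syndetic_iii}) $\Rightarrow$ (\ref{sf-syndetic_ii}), which you already have for free from (\ref{sf-syndetic_iii}) $\Rightarrow$ (\ref{sf-syndetic_i}) $\Rightarrow$ (\ref{sf-syndetic_ii}). Nowhere in your plan does (\ref{sf-syndetic_ii}) occur as a hypothesis, so the substantive implication (\ref{sf-syndetic_ii}) $\Rightarrow$ (\ref{sf-syndetic_i}) or (\ref{sf-syndetic_iii}) is never proved; and any attempt to prove it your way must convert syndeticity of $\Omega _p(A)$ into covering information about $\delta S$, which is exactly the false half of your lemma. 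The paper's proof of (\ref{sf-syndetic_ii}) $\Rightarrow$ (\ref{sf-syndetic_iii}) avoids primeness entirely: assuming $p\notin (\delta S)qp$, it uses regularity of $\delta S$ to choose a $\tau (\mathcal{F})$-open $A\in p$ with $\text{cl}_{\delta S}(\widehat{A})\cap (\delta S)qp=\emptyset $, covers $\delta S$ by the \emph{closed} sets $\overline{\varepsilon (t^{-1}B)}$, $t\in F$, where $B=\Omega _p(A)$ is syndetic --- this needs only density of $\varepsilon (S)$ and finiteness of $F$ --- and then pushes $q\in \overline{\varepsilon (t^{-1}B)}$ through products using the closure statements (\ref{sf-sfp_iia}) and (\ref{sf-sfp_iva}) of Lemma \ref{sf-product}, landing $\varepsilon (t)qp$ in $\overline{\varepsilon (A)}=\text{cl}_{\delta S}(\widehat{A})$ by (\ref{sf:eq:3}), a contradiction. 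Replacing your open-hat cover by this closure cover, and reorienting the implication so that (\ref{sf-syndetic_ii}) is the hypothesis, is the necessary repair.
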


\begin{proof}
(\ref{sf-syndetic_i}) $\Rightarrow $ (\ref{sf-syndetic_ii})
Suppose that $p\in K$.
Let $A\in p$ and
let $L$ be the minimal left ideal of $\delta S$ such that $p\in L$.
If $q\in L$, then $L=(\delta S)q=\overline{\varepsilon (S)q}$,
and so $\widehat{A}\cap \varepsilon (S)q \neq \emptyset $.
Pick some element $s\in S$ such that $\varepsilon (s)q\in \widehat{A}$.
Then $s^{-1}A\in q $.
So, for every element $q\in L$, there exists some
$s\in S$ such that $s^{-1}A\in q$.
Therefore, $L\subseteq \bigcup _{s\in F}\widehat{s^{-1}A}$
for some finite subset $F$ of $S$.

Let $t\in S$.
Since $\varepsilon (t)p \in L$, there exists an element $s\in F$ such that
$s^{-1}A\in \varepsilon (t)p$.
Then $t^{-1}(s^{-1}A)=(st)^{-1}A\in p$,
so $st\in \Omega _p(A)$, and so $t\in s^{-1}\Omega _p(A)$, as required.

(\ref{sf-syndetic_ii}) $\Rightarrow $ (\ref{sf-syndetic_iii})
Suppose that (\ref{sf-syndetic_ii}) holds.
Suppose also that there exists some $q\in \delta S$
such that $p\notin (\delta S)qp$.
Since $\delta S$ is a regular topological space,
there exists a $\tau (\mathcal{F})$-open subset $A$ of $S$ such that $A\in p$ and
$\text{cl} _{\delta S}(\widehat{A})\cap (\delta S)qp =\emptyset $.
Put $B=\Omega _p(A)$ and pick a finite subset $F$ of $S$
such that $S=\bigcup _{t\in F}t^{-1}B$.
Pick $t\in F$ such that $q\in \overline{\varepsilon (t^{-1}B)}$.
Then $\varepsilon (t)q\in \overline{\varepsilon (B)}$ by Lemma \ref{sf-product}
(\ref{sf-sfp_iia}),
and so $\varepsilon (t)qp\in \overline{\varepsilon (A)}$ by Lemma \ref{sf-product}
(\ref{sf-sfp_iva}).
But then $\varepsilon (t)qp\in \text{cl}_{\delta S}(\widehat{A})$ by
(\ref{sf:eq:3}), a contradiction.

(\ref{sf-syndetic_iii}) $\Rightarrow $ (\ref{sf-syndetic_i})
Choose any $q\in K$.
\end{proof}

\begin{definition}
A subset $A$ of $S$ is \emph{piecewise syndetic}
if and only if there exists a finite subset $F$ of $S$
such that the family $\{ s^{-1}(\bigcup _{t\in F}t^{-1}A) : s\in S \}$ has the
finite intersection property.
\end{definition}


\begin{theorem}
\label{sf-piece}
The following statements hold for a subset $A$ of $S$:
\begin{enumerate}[\upshape (i)]
\item\label{sf-piece_i}
If $\widehat{A}\cap K\neq \emptyset $,
then $A$ is piecewise syndetic.
\item\label{sf-piece_ii}
If $A$ is $\tau (\mathcal{F})$-open and piecewise syndetic,
then $\text{cl} _{\delta S}(\widehat{A}) \cap K\neq \emptyset $.
\end{enumerate}
\end{theorem}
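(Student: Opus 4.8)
The plan is to prove the two implications separately, drawing on Theorem~\ref{sf-syndetic} for part~(\ref{sf-piece_i}) and on the closure-valued statements of Lemma~\ref{sf-product} for part~(\ref{sf-piece_ii}). For~(\ref{sf-piece_i}) I would start from a point $p\in\widehat{A}\cap K$, so that $A\in p$ and $p\in K$. By Theorem~\ref{sf-syndetic}~(\ref{sf-syndetic_ii}) the set $\Omega_p(A)$ is syndetic, so there is a finite $F\subseteq S$ with $S=\bigcup_{t\in F}t^{-1}\Omega_p(A)$. I claim this same $F$ witnesses piecewise syndeticity: given $s\in S$, choose $t\in F$ with $ts\in\Omega_p(A)$; then $s^{-1}(t^{-1}A)=(ts)^{-1}A\in p$, whence $s^{-1}\bigl(\bigcup_{t\in F}t^{-1}A\bigr)\in p$. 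Thus the whole family $\{\,s^{-1}(\bigcup_{t\in F}t^{-1}A):s\in S\,\}$ is contained in the $\mathcal{F}$-ultrafilter $p$ and therefore has the finite intersection property. This part is routine once Theorem~\ref{sf-syndetic} is available and does not require $A$ to be open.

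For part~(\ref{sf-piece_ii}), write $E=\bigcup_{t\in F}t^{-1}A$, where $F$ is the finite set witnessing piecewise syndeticity, so that $\{\,s^{-1}E:s\in S\,\}$ has the finite intersection property. Since $A$ is $\tau(\mathcal{F})$-open, Lemma~\ref{sf-tauopen} shows that each $t^{-1}A$, hence $E$, and hence each $s^{-1}E$, is $\tau(\mathcal{F})$-open. The first key step is to manufacture a single point $q\in\delta S$ lying in $\overline{\varepsilon(s^{-1}E)}$ for every $s\in S$. For any finite $\{s_1,\dots,s_n\}$, an element $x$ of the nonempty set $\bigcap_i s_i^{-1}E$ gives $\varepsilon(x)\in\bigcap_i\overline{\varepsilon(s_i^{-1}E)}$, so the closed sets $\overline{\varepsilon(s^{-1}E)}$ have the finite intersection property; by compactness of $\delta S$ their total intersection is nonempty, and I let $q$ lie in it.

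The second key step converts $q$ into a point of $K$ near $A$. For each $s\in S$, from $q\in\overline{\varepsilon(s^{-1}E)}$ and Lemma~\ref{sf-product}~(\ref{sf-sfp_iia}) I obtain $\varepsilon(s)q\in\overline{\varepsilon(E)}$, so $\varepsilon(S)q\subseteq\overline{\varepsilon(E)}$. Continuity of $\rho_q$ together with $\overline{\varepsilon(S)}=\delta S$ then yields $(\delta S)q\subseteq\overline{\varepsilon(S)q}\subseteq\overline{\varepsilon(E)}=\text{cl}_{\delta S}(\widehat{E})$, the last equality by~(\ref{sf:eq:3}) since $E$ is open. Picking any $k\in K$, the point $p:=kq$ lies in $(\delta S)q\subseteq\text{cl}_{\delta S}(\widehat{E})$ and also in $K$, because $K$ is a (two-sided) ideal. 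Since $\widehat{E}=\bigcup_{t\in F}\widehat{t^{-1}A}$ is a finite union, $\text{cl}_{\delta S}(\widehat{E})=\bigcup_{t\in F}\text{cl}_{\delta S}(\widehat{t^{-1}A})$, so $p\in\text{cl}_{\delta S}(\widehat{t^{-1}A})$ for some $t\in F$. Finally, Lemma~\ref{sf-product}~(\ref{sf-sfp_ii}), applicable because $A$ is $\tau(\mathcal{F})$-open, gives $\varepsilon(t)p\in\text{cl}_{\delta S}(\widehat{A})$, and $\varepsilon(t)p\in K$ again because $K$ is an ideal; this point lies in $\text{cl}_{\delta S}(\widehat{A})\cap K$, as required.

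The step I expect to be the main obstacle is producing the auxiliary point $q$. The naive transcription of the discrete $\beta S$ argument would try to extend the finite-intersection family $\{s^{-1}E:s\in S\}$ to an $\mathcal{F}$-ultrafilter $q$ with $s^{-1}E\in q$ for every $s$ (so that $\Omega_q(E)=S$ and then Lemma~\ref{sf-product}~(\ref{sf-sfp_iv}) applies). This is delicate here, since an $\mathcal{F}$-ultrafilter need not contain an arbitrary $\tau(\mathcal{F})$-open member of a finite-intersection family, and $\widehat{B}$ need not be closed, so membership and closure genuinely diverge. The route sketched above avoids this difficulty entirely: it asks only that $q$ lie in each closed set $\overline{\varepsilon(s^{-1}E)}$, which compactness supplies directly, and it invokes the closure-valued statements~(\ref{sf-sfp_iia}) and~(\ref{sf-sfp_ii}) of Lemma~\ref{sf-product} rather than their membership analogues. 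The only other care needed is the passage $(\delta S)q\subseteq\overline{\varepsilon(E)}$, which rests on right-continuity of $\rho_q$ and density of $\varepsilon(S)$.
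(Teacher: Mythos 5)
Your proof of part (\ref{sf-piece_i}) is exactly the paper's, and your strategy for part (\ref{sf-piece_ii}) also coincides with the paper's own proof (same auxiliary point $q\in\bigcap_{s\in S}\overline{\varepsilon(s^{-1}E)}$, same push into $\text{cl}_{\delta S}(\widehat{E})$ via Lemma \ref{sf-product}, same finish via Lemma \ref{sf-product} (\ref{sf-sfp_ii}) and the ideal property of $K$). However, there is one genuinely false step: the claim that $\widehat{E}=\bigcup_{t\in F}\widehat{t^{-1}A}$. Only the inclusion $\supseteq$ holds in general. The missing inclusion asserts that an $\mathcal{F}$-ultrafilter containing a finite union of ($\tau(\mathcal{F})$-open) sets must contain one of them; this is a property of genuine ultrafilters on a discrete set which fails for $\mathcal{F}$-ultrafilters --- ironically, exactly the kind of discrete-$\beta S$ transfer you warn against in your own closing paragraph. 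For a concrete failure, take the paper's example $S=[0,1[$ with $\delta S=[0,1]$, and write $]1/2,1[$ as the union of two Euclidean-open (hence $\tau(\mathcal{F})$-open) sets $U$ and $V$, neither of which contains any terminal interval $]1-\delta ,1[$ (interleaved unions of intervals accumulating at $1$ do this). Every member of the $\mathcal{F}$-ultrafilter $p_1$ corresponding to the point $1$ is a superset of some $S(f,r)$ with $\widehat{f}(1)=0$, and therefore contains a terminal interval; hence $U\notin p_1$ and $V\notin p_1$, while $U\cup V\supseteq \,]1/2,1[\,\in p_1$. So $p_1\in\widehat{U\cup V}\setminus (\widehat{U}\cup \widehat{V})$.

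Fortunately, the conclusion you draw from the false identity --- namely $\text{cl}_{\delta S}(\widehat{E})=\bigcup_{t\in F}\text{cl}_{\delta S}(\widehat{t^{-1}A})$ --- is still true, and is repairable with tools you already invoke: since $E$ and each $t^{-1}A$ are $\tau(\mathcal{F})$-open (Lemma \ref{sf-tauopen}), equation (\ref{sf:eq:3}) gives $\text{cl}_{\delta S}(\widehat{E})=\overline{\varepsilon (E)}=\overline{\bigcup_{t\in F}\varepsilon (t^{-1}A)}=\bigcup_{t\in F}\overline{\varepsilon (t^{-1}A)}=\bigcup_{t\in F}\text{cl}_{\delta S}(\widehat{t^{-1}A})$, using that $\varepsilon$-images commute with unions and that closure commutes with finite unions. (The paper instead argues at the level of filters: if for every $t\in F$ some $C_t\in q$ satisfied $t^{-1}A\cap C_t=\emptyset$, then $\bigcap_{t\in F}C_t\in q$ would miss $E$, contradicting $q\in\text{cl}_{\delta S}(\widehat{E})$; so a single $t$ has $t^{-1}A$ meeting every member of $q$, whence $q\in\overline{\varepsilon (t^{-1}A)}=\text{cl}_{\delta S}(\widehat{t^{-1}A})$.) With this one step repaired, your argument is correct and is essentially the published one; the remaining differences are cosmetic --- you justify the nonemptiness of the auxiliary intersection explicitly by compactness, use statement (\ref{sf-sfp_iia}) where the paper combines (\ref{sf-sfp_ii}) with (\ref{sf:eq:3}), and produce a point of $K\cap (\delta S)q$ as $kq$ with $k\in K$ rather than by intersecting $K$ with the left ideal $(\delta S)q$.
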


\begin{proof}
(\ref{sf-piece_i})
Suppose that $p\in \widehat{A}\cap K$.
By Theorem \ref{sf-syndetic}, there exists a finite subset $F$ of $S$
with $S=\bigcup _{t\in F}t^{-1}\Omega _p(A)$.
Let $s\in S$ and pick $t\in F$ such that $ts\in \Omega _p(A)$.
Then $s^{-1}(t^{-1}A)\in p$,
so $s^{-1}(\bigcup _{t\in F}t^{-1}A)\in p$
for every $s\in S$, and so $A$ is piecewise syndetic.

(\ref{sf-piece_ii})
Suppose that $A$ is $\tau (\mathcal{F})$-open and piecewise syndetic.
Pick a finite subset $F$ of $S$ such that the family $\{ s^{-1}(\bigcup
_{t\in F}t^{-1}A) : s\in S\}$ has the finite intersection property.
Put $B=\bigcup _{t\in F}t^{-1}A$.
Then $B$ is a $\tau (\mathcal{F})$-open subset of $S$ by Lemma \ref{sf-tauopen}.
The set $C=\bigcap _{s\in S} \overline{\varepsilon (s^{-1}B)}$ is a non-empty
subset of $\delta S$, so pick some element $p\in C$.

If $s\in S$, then
$p\in \text{cl}_{\delta S}(\widehat{s^{-1}B})$ by Lemma \ref{sf-tauopen} and
(\ref{sf:eq:3}),
and so $\varepsilon (s)p\in \text{cl}_{\delta S}(\widehat{B})$ by
Lemma \ref{sf-product} (\ref{sf-sfp_ii}).
Therefore, $(\delta S)p\subseteq
\text{cl}_{\delta S}(\widehat{B})$.
Here, $(\delta S)p$ is a left ideal of $\delta S$,
and so we may pick some element $q\in K\cap (\delta S)p$.
Since $q\in \text{cl} _{\delta S}(\widehat{B})$,
we have $B\cap C\neq \emptyset $ for every $C\in q$.
By the definition of the set $B$,
we may assume that there exists an element $t\in F$ such that
$t^{-1}A\cap C\neq \emptyset $ for every $C\in q$.
Then
$q\in \text{cl}_{\delta S}(\widehat{t^{-1}A})$ by Lemma \ref{sf-tauopen},
and so $\varepsilon (t)q\in \text{cl}_{\delta S}(\widehat{A})$ by
Lemma \ref{sf-product} (\ref{sf-sfp_ii}).
Since $q\in K$, we have also $\varepsilon (t)q\in K$,
thus finishing the proof.
\end{proof}

\begin{corollary}
If $p\in \delta S$,
then $p\in \overline{K}$ if and only if every $A\in p$ is
piecewise syndetic.
\end{corollary}


\begin{definition}
A subset $A$ of $S$ is \emph{central} if and only if there exists
an idempotent $e\in K$ such that $A\in e$.
\end{definition}

The previous definition depends on the semigroup compactification
$\delta S$ in question, but we hope that the terminology chosen above does
not cause any misunderstandings.

\begin{theorem}
\label{sf:lstthm}
Let $A\subseteq S$.
The implications (\ref{sf:lstthm_i}) $\Rightarrow $
(\ref{sf:lstthm_ii}) $\Rightarrow $ (\ref{sf:lstthm_iii}) $\Rightarrow
$ (\ref{sf:lstthm_v}) hold for the following statements:
\begin{enumerate}[\upshape (i)]
\item \label{sf:lstthm_i}
$\widehat{A}\cap K\neq \emptyset $.
\item \label{sf:lstthm_ii}
The set $\{ s\in S : s^{-1}A \text{ is central} \}$ is syndetic.
\item \label{sf:lstthm_iii}
There exists some $s\in S$ such that $s^{-1}A$ is
  central.
\item \label{sf:lstthm_v}
$A$ is piecewise syndetic.
\end{enumerate}
\end{theorem}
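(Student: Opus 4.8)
The three implications are of very unequal difficulty, and the plan is to dispatch the last two quickly and to concentrate essentially all of the work on (\ref{sf:lstthm_i}) $\Rightarrow$ (\ref{sf:lstthm_ii}).

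For (\ref{sf:lstthm_ii}) $\Rightarrow$ (\ref{sf:lstthm_iii}) I would only observe that a syndetic subset of $S$ is non-empty: if $D=\{s\in S: s^{-1}A \text{ is central}\}$ is syndetic, then $S=\bigcup_{s\in F}s^{-1}D$ forces $D\neq\emptyset$, and any element of $D$ witnesses (\ref{sf:lstthm_iii}). For (\ref{sf:lstthm_iii}) $\Rightarrow$ (\ref{sf:lstthm_v}), suppose $s^{-1}A$ is central, say $s^{-1}A\in e$ for an idempotent $e\in K$. Then $e\in\widehat{s^{-1}A}\cap K$, so Theorem \ref{sf-piece}(\ref{sf-piece_i}) shows that $s^{-1}A$ is piecewise syndetic. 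It then remains to transport piecewise syndeticity along the left translation by $s$: if a finite set $F$ witnesses piecewise syndeticity of $s^{-1}A$, then using $t^{-1}(s^{-1}A)=(st)^{-1}A$ one rewrites $\bigcup_{t\in F}t^{-1}(s^{-1}A)=\bigcup_{w\in sF}w^{-1}A$, so the finite set $sF$ witnesses piecewise syndeticity of $A$. No $\tau(\mathcal{F})$-openness is needed in either step.

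The heart of the matter is (\ref{sf:lstthm_i}) $\Rightarrow$ (\ref{sf:lstthm_ii}), which I regard as a refinement of the proof of Theorem \ref{sf-syndetic}. Fix $p\in\widehat{A}\cap K$ and let $L$ be the minimal left ideal of $\delta S$ with $p\in L$; since $L$ is a closed subsemigroup, hence a compact right topological semigroup, it contains an idempotent $e$, and $e\in L\subseteq K$. The goal is to produce a single finite $F\subseteq S$ such that for every $t\in S$ some $s\in F$ satisfies $(st)^{-1}A\in e$; as $e$ is an idempotent of $K$, each such $(st)^{-1}A$ is then central, so $st\in D$ exhibits $t\in s^{-1}D$, giving $S=\bigcup_{s\in F}s^{-1}D$ and the syndeticity of $D$. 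To obtain $F$, I would consider the sets $U_s=\{q\in L:\varepsilon(s)q\in\widehat{A}\}$ for $s\in S$. Each $U_s$ is open in $L$, because $\lambda_{\varepsilon(s)}$ is continuous (as $\varepsilon(s)\in\Lambda(\delta S)$) and $\widehat{A}$ is a basic open subset of $\delta S$. These sets cover $L$: for $q\in L$, minimality gives $L=(\delta S)q=\overline{\varepsilon(S)q}$, so $p\in\overline{\varepsilon(S)q}$, and the open neighbourhood $\widehat{A}$ of $p$ meets $\varepsilon(S)q$, producing $s$ with $\varepsilon(s)q\in\widehat{A}$, i.e. $q\in U_s$. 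Compactness of $L$ then yields a finite subcover $F$. Finally, for $t\in S$ the element $\varepsilon(t)e$ lies in $L$ (since $L$ is a left ideal), hence in some $U_s$ with $s\in F$; then $\varepsilon(st)e=\varepsilon(s)\varepsilon(t)e\in\widehat{A}$, so $A\in\varepsilon(st)e$, and Lemma \ref{sf-product}(\ref{sf-sfp_i}) delivers $(st)^{-1}A\in e$, as required.

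The main conceptual obstacle is deciding what to route the translates through. The naive attempt to place translates of $A$ into the given point $p$ only recovers $(st)^{-1}A\in p$, and membership in a point of $K$ does not yield centrality, since $K$ contains non-idempotents and, as the remarks after Lemma \ref{sf-product} stress, $A\in\varepsilon(st)q$ need not follow from $(st)^{-1}A\in q$. The resolution is to work through a fixed idempotent $e$ of $K$ sitting inside $L$, and to cover $L$ by the sets $U_s$ rather than by the preimages $\widehat{s^{-1}A}$ used in Theorem \ref{sf-syndetic}. Once this is arranged, the argument needs no openness hypothesis on $A$, because $\widehat{A}$ is automatically open and Lemma \ref{sf-product}(\ref{sf-sfp_i}) holds for arbitrary subsets; the only external inputs are the existence of an idempotent in a minimal left ideal and the identity $L=\overline{\varepsilon(S)q}$ for $q\in L$, both standard facts about $K$.
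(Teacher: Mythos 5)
Your proof is correct, and its conceptual core --- routing the translates of $A$ through a fixed idempotent $e$ in the minimal left ideal $L$ containing $p$, precisely because membership of $(st)^{-1}A$ in a non-idempotent element of $K$ gives no centrality --- is exactly the insight behind the paper's own proof. The difference lies in the decomposition. For (\ref{sf:lstthm_i}) $\Rightarrow$ (\ref{sf:lstthm_ii}), the paper first produces a single $s$ with $s^{-1}A\in e$ (from $A\in p=pe$ and Lemma \ref{sf-product} (\ref{sf-sfp_iii})) and then cites Theorem \ref{sf-syndetic} for the pair $(e,s^{-1}A)$, obtaining the syndetic set $\Omega_e(s^{-1}A)$ and the final witness $sF$; you instead rerun the compactness argument that underlies Theorem \ref{sf-syndetic}, covering $L$ by the open sets $U_s=\{q\in L:\varepsilon(s)q\in\widehat{A}\}$ and pushing $\varepsilon(t)e$ through a finite subcover, which avoids both the identity $pe=p$ and Lemma \ref{sf-product} (\ref{sf-sfp_iii}) at the cost of duplicating, inside this proof, a covering argument the paper reuses by citation. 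For (\ref{sf:lstthm_iii}) $\Rightarrow$ (\ref{sf:lstthm_v}), the paper verifies directly (again via Theorem \ref{sf-syndetic}) that $u^{-1}(\bigcup_{t\in sF}t^{-1}A)\in e$ for every $u\in S$, so that the relevant family lies in the filter $e$ and hence has the finite intersection property; you instead note $e\in\widehat{s^{-1}A}\cap K$, cite Theorem \ref{sf-piece} (\ref{sf-piece_i}) to get piecewise syndeticity of $s^{-1}A$, and transport it along the identity $t^{-1}(s^{-1}A)=(st)^{-1}A$ --- shorter, and it isolates the useful general fact that piecewise syndeticity pulls back along left translation. Every external ingredient you invoke (openness of $\widehat{A}$, continuity of $\lambda_{\varepsilon(s)}$, $L=(\delta S)q=\overline{\varepsilon(S)q}$ for $q\in L$, existence of idempotents in compact right topological semigroups, and Lemma \ref{sf-product} (\ref{sf-sfp_i})) is available in the paper prior to this theorem, so your argument goes through as written.
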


\begin{proof}
(\ref{sf:lstthm_i}) $\Rightarrow $ (\ref{sf:lstthm_ii})
Suppose that $p\in \widehat{A}\cap K$.
Let $L$ be the minimal left ideal of $\delta S$ with $p\in L$ and
pick an idempotent $e\in L$.
Then $A\in p=pe$, so $\Omega _e(A)\in p$ by Lemma
\ref{sf-product} (\ref{sf-sfp_iii}), and so there exists some element
$s\in S$ such that $s^{-1}A\in e$.
Put $B=\Omega _e(s^{-1}A)$.
By Theorem \ref{sf-syndetic}, there exists 
a finite subset $F$ of $S$ such that $S=\bigcup _{t\in
  F}t^{-1}B$.
Put $C=\{ s\in S : s^{-1}A \text{ is central} \}$.
Now, it is enough to show that $S=\bigcup _{u\in sF} u^{-1}C$.
But if $v\in S$, then there exists an element $t\in F$ such that
$tv\in B$,
so $(tv)^{-1}(s^{-1}A)=(stv)^{-1}A\in e$,
and so $stv\in C$.
Therefore, $v\in (st)^{-1}C$, as required.

(\ref{sf:lstthm_ii}) $\Rightarrow $ (\ref{sf:lstthm_iii})
A syndetic subset of $S$ is non-empty.


(\ref{sf:lstthm_iii}) $\Rightarrow $ (\ref{sf:lstthm_v})
Suppose that (\ref{sf:lstthm_iii}) holds.
Pick an element $s\in S$ and an idempotent $e\in K$ with $s^{-1}A\in e$.
By Theorem \ref{sf-syndetic}, there exists a finite subset $F$ of $S$
such that $S=\bigcup _{t\in F} t^{-1}\Omega _e(s^{-1}A)$.
Put $B=\bigcup _{t\in sF} t^{-1}A$.
Now, it is enough to show that $u^{-1}B\in e$ for every $u\in S$.
If $u\in S$, then $tu\in \Omega _e(s^{-1}A)$ for some $t\in F$,
so $(tu)^{-1}(s^{-1}A)=u^{-1}((st)^{-1}A)\in e$,
and so $u^{-1}B\in e$, as required.
\end{proof}

We finish this paper with some examples showing that, in general, the
implications (\ref{sf:lstthm_i}) $\Rightarrow $ (\ref{sf:lstthm_ii})
and (\ref{sf:lstthm_iii}) $\Rightarrow $ (\ref{sf:lstthm_v}) in the
previous theorem can not be reversed.
At the moment, we are unable to determine whether the implication
(\ref{sf:lstthm_ii}) $\Rightarrow $ (\ref{sf:lstthm_iii})
can be reversed.

First, let us consider the multiplicative semigroups $S=[0,1[$ and
$\delta S=[0,1]$ under their Euclidean topologies.
Now, $K=\{0\}$.
Clearly, any subset of $S$ containing the element $0$ is syndetic.
If $A=\{0\}$ and $s=0$, then the set $s^{-1}A=S$ is central.
Since $\widehat{A}=\emptyset $, we see that statement
(\ref{sf:lstthm_ii}) need not imply statement (\ref{sf:lstthm_i}).

To see that the implication (\ref{sf:lstthm_v}) $\Rightarrow $
(\ref{sf:lstthm_iii}) need not hold,
consider the one-point compactification $\delta \mathbb{R}$
of $\mathbb{R}$, that is, $\delta \mathbb{R}=\mathbb{R}\cup \{\infty
\}$.
In this case, $K=\{ \infty \}$.
Put
\[
A= \bigcup _{k=-\infty }^{\infty } [2k-1,2k].
\]
Then $A\cup (-1+A)=\mathbb{R}$, and so $A$ is a piecewise syndetic subset of $\mathbb{R}$.
Let $s\in [0,1[$ and put $s_k=2k-s+1/2$ for every $k\in \mathbb{N}$.
Then the sequence $(s_k)$ converges to $\infty $ in $\delta \mathbb{R}$ and
satisfies $s_k\notin \widehat{-s+A}$ for every $k\in \mathbb{N}$.
Therefore, $-s+A\notin \infty $.
If $t\in \mathbb{R}$ is any element, then $t=s+n$ for some $s\in
[0,1[$ and $n\in \mathbb{Z}$.
The equality $-t+A=-s+A$ implies that $-t+A$ is not central.


\affiliationone{
   T. Alaste\\
   University of Oulu\\
   Department of Mathematical Sciences\\
   P.O. Box 3000\\
   FI-90014 University of Oulu\\
   Finland
   \email{tomi.alaste@gmail.com}}

\begin{thebibliography}{99}
%
%
\bibitem{ufilters}
{\bibname T. Alaste},
`$\mathcal{U}$-filters and uniform compactification',
{\em Studia Math. }211 (2012) 215--229.
%
\bibitem{BudPym}
{\bibname T. Budak \and J. Pym},
`Local topological structure in the LUC-compactification of a locally
compact group and its relationship with Veech's theorem',
{\em Semigroup Forum }73 (2006) 159--174.
%
\bibitem{wapfilt}
{\bibname J. F. Berglund \and N. Hindman},
`Filters and the weak almost periodic compactification of a discrete semigroup',
{\em Trans. Amer. Math. Soc. }284 (1984) 1--38.
%
\bibitem{BJM}
{\bibname J. F. Berglund, H. D. Junghenn \and P. Milnes},
{\em Analysis on semigroups}
(John Wiley \& Sons, Inc., New York, 1989).
%
%
\bibitem{HindStra}
{\bibname N. Hindman \and D. Strauss},
{\em Algebra in the {Stone--{\v C}ech} compactification}
(Walter de Gruyter \& Co., Berlin, 1998).
%
%
\bibitem{near}
{\bibname M. Ko\c{c}ak \and D. Strauss},
`Near ultrafilters and compactifications',
{\em Semigroup Forum }55 (1997) 94--109.
%
\bibitem{Naimpally}
{\bibname S. A. Naimpally \and B. D. Warrack},
{\em Proximity spaces}
(Cambridge University Press, London, 1970).
%
%
\bibitem{toot1}
{\bibname M. A. Tootkaboni \and A. Riazi},
`Ultrafilters on semitopological semigroups',
{\em Semigroup Forum }70 (2005) 317--328.
%
\end{thebibliography}
\end{document}